\newcommand\blfootnote[1]{%
  \begingroup
  \renewcommand\thefootnote{}\footnote{#1}%
  \addtocounter{footnote}{-1}%
  \endgroup
}
\title{Existence and non--uniqueness of global weak solutions to inviscid primitive 
and Boussinesq equations}
\author[1]{Elisabetta Chiodaroli}
\author[2]{Martin Mich\'{a}lek}
\affil[1]{EPFL Lausanne

Station 8, CH-1015 Lausanne, Switzerland, \vspace*{0.2cm}}
\affil[2]{
Institute of Mathematics of the Czech Academy of Sciences,
                
\v Zitn\'a 25, 115 67 Praha 1, Czech Republic} 
\date{}
\begin{document}
    \maketitle

    \begin{abstract}
        We consider the initial value problem for the inviscid Primitive and Boussinesq equations 
        in three spatial dimensions. We recast both systems as an abstract Euler-type system and 
        apply the methods of convex integration of De Lellis and Sz\'ekelyhidi to show the existence of infinitely many
        global weak solutions of the studied equations for general initial data. 
        We also introduce an appropriate notion of dissipative solutions and show the existence of suitable initial data 
        which generate infinitely many dissipative solutions.
    \end{abstract}

    \blfootnote{Keywords: Primitive equations, Boussinesq approximation, weak solution, convex integration.}

\section{Introduction}
The Boussinesq equations are used to model the behaviour of oceans. 
Recall that the Boussinesq approximation consists in neglecting changes of density except in
the buoyancy terms and results in a system coupling the incompressible Navier-Stokes equations 
(for an unknown velocity field $\mathbf{u}=\mathbf{u}(t,\mathbf{x}) = (u,v,w)$ and  pressure $p = p(t,\mathbf{x})$) 
with the convection-diffusion equation (for an unknown temperature $\theta=\theta(t,\mathbf{x})$). 
Physically relevant references can be found in \cite{LioTemWang1}. 
We will also consider the effect of the Coriolis force in the form $\Omega \times \mathbf{u}$
for a vector function $\Omega = (\Omega_x,\Omega_y,\Omega_z)$ and neglect the effect of viscosity.
The inviscid Boussinesq equations then read as
\begin{subequations}
    \label{eq:boussinesq}
    \begin{equation}
        \label{eq:bous_mom1}
        \partial_t u + \mathbf{u}\cdot \nabla_{\mathbf{x}} u   +\Omega_y w - \Omega_z v
        +\partial_x p = 0,
    \end{equation}
    \begin{equation}
        \label{eq:bous_mom2}
        \partial_t v + \mathbf{u}\cdot \nabla_{\mathbf{x}} v  -\Omega_x w + \Omega_z u
        +\partial_y p = 0,
    \end{equation}
    \begin{equation}
        \label{eq:bous_mom3}
        \partial_t w + \mathbf{u}\cdot\nabla_{\mathbf{x}} w   +\Omega_x v - \Omega_y u
        +\partial_z p = -\theta,
    \end{equation}
    \begin{equation}
        \label{eq:bous_incom}
        \diver_\mathbf{x} \mathbf{u} = 0,
    \end{equation}
    \begin{equation}
        \label{eq:bous_temp}
        \partial_t \theta + \mathbf{u}\cdot \nabla_{\mathbf{x}} \theta-\lambda_1 (\partial^2_{xx}+\partial^2_{yy})\theta-\lambda_2 \partial^2_{zz} \theta  = 0
    \end{equation}
\end{subequations}
for unknown functions $u$, $v$, $w$, $p$ and $\theta \colon [0,T) \times U \to \BR$. We consider $U\subseteq \BR^3$ an open bounded set. 
The parameters $T$, $\lambda_1$, and $\lambda_2$ 
are given positive real constants without additional restrictions. We will also denote $Q=(0,T)\times U$. 

When modeling the large scale behaviour of oceans or the atmosphere, one spatial scale (vertical) is essentially smaller than the other (horizontal) ones. 
The primitive equations, which are also considered, can be obtained as a formal singular limit of the Boussinesq equations 
in the way that the convective derivative of the vertical velocity coordinate is neglected. The momentum equation for
the vertical velocity component is then replaced by the hydrostatic approximation.
Under the already given notation, the inviscid primitive equations consist of the following system of partial differential equations:
\begin{subequations}
    \label{eq:primeqs}
    \begin{equation}
        \label{eq:mom1}
        \partial_t u + \mathbf{u}\cdot \nabla_{\mathbf{x}} u   - \Omega_z v+ \Omega_y w
        +\partial_x p = 0,
    \end{equation}
    \begin{equation}
        \label{eq:mom2}
        \partial_t v + \mathbf{u}\cdot \nabla_{\mathbf{x}} v   + \Omega_z u - \Omega_x w
        +\partial_y p = 0,
    \end{equation}
    \begin{equation}
        \label{eq:mom3}
        \partial_z p = -\theta,
    \end{equation}
    \begin{equation}
        \label{eq:incom}
        \diver \mathbf{u} = 0,
    \end{equation}
    \begin{equation}
        \label{eq:temp}
        \partial_t \theta + \mathbf{u}\cdot \nabla_{\mathbf{x}} \theta-\lambda_1 (\partial^2_{xx}+\partial^2_{yy})\theta-\lambda_2 \partial^2_{zz} \theta  = 0.
    \end{equation}
\end{subequations}
To complete both systems, we assume the ``no-flow'' boundary condition for $(u,v,w)$ and homogeneous Dirichlet condition for $\theta$:
\begin{subequations}
    \label{eq:bc}
    \begin{equation}
        \label{eq:velocitybc}
        \mathbf{u}(t,\mathbf{x})\cdot \eta(t,\mathbf{x})=0 \quad \mbox{on } (0,T)\times \partial U,
    \end{equation}
    \begin{equation}
        \label{eq:tempbc}
        \theta (t,\mathbf{x})=0 \quad \mbox{on } (0,T)\times \partial U,
    \end{equation}
\end{subequations}
where $\eta$ denotes the exterior normal to the boundary $\partial U$.
Both systems describe the time evolution of $u$, $v$ and $\theta$ and therefore it makes sense to prescribe initial conditions for these quantities. We assume that  
\begin{align}
    \label{eq:ic_uv}
     u(0,\cdot)=u_0, v(0,\cdot)=v_0 \mbox{ and } \theta(0,\cdot)=\theta_0 \quad  \mbox{in } U.
\end{align}
For the Boussinesq equations, we also prescribe the initial vertical velocity
\begin{align}
    \label{eq:ic_w}
     w(0,\cdot)=w_0 \quad  \mbox{in } U.
\end{align}

\begin{rem}
    The main results of the article hold also for other boundary conditions for $\theta$ 
    for which solutions of \eqref{eq:bous_temp} or \eqref{eq:temp} with $\mathbf{u}\in L^{\infty}(Q;\BR^3)$ 
    belong to $\CC(\overbar{Q})$.
\end{rem}

Let us recall known results about the mentioned systems.
The system \eqref{eq:boussinesq} shares many similarities with the Euler system (i.\,e.\, when $\theta = 0$).
In two dimensions, the global well--posedness for regular initial data was established in \cite{chae} 
(see also \cite{danchin} for the recent development).

To the best of our knowledge, the question of the existence of global solutions of \eqref{eq:boussinesq} in 3D remains open. 
We give a positive answer to this question  in the case of weak solutions.
A similar system, namely \eqref{eq:boussinesq} in dimension $2$ with $\lambda_1 = \lambda_2=0$ and without the temperature in \eqref{eq:bous_mom3}, was treated
in \cite{BronziLopez} using a slightly different approach.

Considering three spatial dimensions, there are a few mathematical results connected to the \emph{viscid} (Navier-Stokes-like) 
analogue of system \eqref{eq:primeqs}. 
The local in time existence of regular solutions was presented in \cite{LioTemWang1}, 
where a proof of the global in time existence of weak solutions can also be found.
The existence of global regular solutions under the assumption that initial data are slowly varying in the $z$--variable was proved in \cite{HuTemZia}.
Cao and Titi demonstrated in \cite{CaoLiTitiregular} that the solutions emanating from regular initial data stay regular for all times $t>0$. 
The regularity was also shown in the case of homogeneous Dirichlet boundary conditions in \cite{KukavicaZiane}. 
Very recently, global strong well-posedness in $L^p$ was given in \cite{hieber}.
These results should be put into comparison with the similar Navier-Stokes system for which the question on the global regularity is still open. 
Very recently, it was shown in \cite{CaoTitiblowup} that if one drops the viscous term and considers the system \eqref{eq:primeqs} with suitable 
boundary conditions then a finite time blow-up occurs for some specific regular initial data. Local existence of regular solutions
for inviscid primitive equations in 2D was given in \cite{brenier}. 

A question remaining open is whether there exist global weak solutions for any (suitably regular) initial data for \eqref{eq:primeqs} 
with \eqref{eq:bc}. 
At first glance, there is almost no hope in any kind of positive answer.
The inviscid primitive equations differ notably from the incompressible Euler system. 
In comparison to the Euler system, primitive equations are degenerate with respect to $w$. 
It is known that the system is not hyperbolic and that the boundary value problem is ill-posed for pointwise 
boundary conditions, see \cite{OligerSundstrom} 
and also \cite{RousTemamTrib}. On the other hand, we recall that De Lellis and Sz\'{e}kelyhidi (see e.\,g.\,\cite{DeleSzekAnnals}, 
\cite{DeleSzekArch}) 
extended the possibility to use techniques of convex integration on the Euler system. 
They constructed  infinitely many ``oscillatory" weak solutions 
satisfying even different admissibility criteria, yet exceptionally non-unique.
The aim of this paper is to demonstrate that the inviscid primitive equations also admit such 
oscillatory solutions. 
We will employ the recent refinements of the De Lellis and Sz\'{e}kelyhidi approach,  
carried out in \cite{DonaFeirMarc} and in \cite{ChioFeirKrem} for the Euler-Fourier system
 or in \cite{SavageHutter} for Savage-Hutter model. 

We give the reader the outline of the rest of the article: in Section \ref{sec:main} 
we define the notion of weak solution and formulate the main results. 
In Section \ref{sec:reformulation} we give a reformulation of the given systems into an abstract Euler-type problem. 
In Sections \ref{sec:existence} and \ref{sec:osc_lem_proof} we present the proof of existence of infinitely weak solutions with 
general initial conditions for the abstract problem combining approaches from \cite{DeleSzekAnnals}, \cite{DeleSzekArch}, \cite{Chiodaroli}, \cite{ChioFeirKrem} and \cite{FeirConvexInt}. 
The result is extended in Section \ref{sec:dissipative}, where the existence of some suitable initial data allowing 
for infinitely many dissipative weak solutions is proven. For the reader's convenience, Section \ref{sec:appendix} 
contains some auxiliary results which are employed in the article.

\section{The main results}
\label{sec:main}
We will denote by $\CC([0,T]; X_{w})$ the set of continuous functions from $[0,T]$ with values in 
a Banach space $X$ equipped with the weak topology. 
For $B\subseteq \BR^d$ open we denote $\CD(B)$  the topological vector space of smooth functions with compact support in $B$ and $\CD'(B)$ its topological dual. 

\subsection{The Boussinesq equations}
We start with introducing the definition of weak solutions to the problem \eqref{eq:boussinesq} supplemented by \eqref{eq:bc} and \eqref{eq:ic_uv}, \eqref{eq:ic_w}. 
\begin{dfn}
    \label{dfn:weaksol_bous} 
    We call the quintet of functions $(u,v,w,p,\theta)$ a \emph{weak solution of the inviscid Boussinesq equations} with 
    \eqref{eq:bc},  \eqref{eq:ic_uv} if
    \begin{itemize}
        \item   $u$, $v$, $w\in \CC([0,T];L^2_w(U))$, $p\in L^1(Q)$ and equations
            \begin{align}
                \label{eq:bous_weakform_u} 
                &\int_{0}^{T}\int_{U}u\partial_t \phi_1\de{\mathbf{x}}\de{t}
                +\int_{0}^{T}\int_{U}u \mathbf{u} \cdot \nabla_{\mathbf{x}}\phi_1\de{\mathbf{x}}\de{t}
                +\int_{U}u_0 (\cdot)\phi_1(0,\cdot)\de{\mathbf{x}}
                \\
                \nonumber
                &\quad\quad+\int_{0}^{T}\int_{U} (-\Omega_y w+\Omega_z v)\phi_1\de{\mathbf{x}}\de{t}+\int_{0}^{T}\int_{U}p \partial_x \phi_1\de{\mathbf{x}}\de{t}=0,
                \\
                \label{eq:bous_weakform_v} 
                &\int_{0}^{T}\int_{U}v\partial_t \phi_2\de{\mathbf{x}}\de{t}
                +\int_{0}^{T}\int_{U}v \mathbf{u} \cdot \nabla_{\mathbf{x}}\phi_2 \de{\mathbf{x}}\de{t}
                +\int_{U}v_0(\cdot) \phi_2(0,\cdot)\de{\mathbf{x}}
                \\
                \nonumber
                &\quad\quad+\int_{0}^{T}\int_{U} (\Omega_x w-\Omega_z u)\phi_2\de{\mathbf{x}}\de{t}+\int_{0}^{T}\int_{U}p \partial_y \phi_2\de{\mathbf{x}}\de{t}
                =0,
                \\
                \label{eq:bous_weakform_w} 
                &\int_{0}^{T}\int_{U}w\partial_t \phi_3\de{\mathbf{x}}\de{t}
                +\int_{0}^{T}\int_{U}w   \mathbf{u} \cdot \nabla_{\mathbf{x}}\phi_3\de{\mathbf{x}}\de{t}
                +\int_{U}w_0 (\cdot)\phi_3(0,\cdot)\de{\mathbf{x}}
                \\
                \nonumber
                &\quad\quad+\int_{0}^{T}\int_{U} (-\Omega_x v+\Omega_y u)\phi_3\de{\mathbf{x}}\de{t}+\int_{0}^{T}\int_{U}p \partial_z \phi_3\de{\mathbf{x}}\de{t}
                  =  \int_{0}^{T}\int_{U}\theta \phi_3  \de{\mathbf{x}}\de{t}
            \end{align}
        are satisfied for any $\phi_1$, $\phi_2$, $\phi_3 \in \CD([0,T)\times U)$,
        
        \item $\mathbf{u}\chi_{Q}$ solves \eqref{eq:bous_incom} in $\CD'((0,T)\times \BR^3)$, i.\,e.\,
            \begin{equation}
                \label{eq:bous_weak_incom}
                \int_{0}^T \int_{U}\mathbf{u}\cdot\nabla_{\mathbf{x}}\phi\de{\mathbf{x}}\de{t} = 0 \mbox{ for every $\phi \in \CD((0,T)\times \BR^3)$},  
            \end{equation}
        \item $\theta\in W^{1,p}\left((0,T); L^p(U)\right)\cap L^p\left((0,T);W^{2,p}(U)\cap W^{1,p}_0(U)\right)$ for a $p\in (1,\infty)$ and \eqref{eq:bous_temp} holds almost everywhere in $Q$ and $\theta(0,\cdot)=\theta_0(\cdot)$ in the sense of time traces.
    \end{itemize}
\end{dfn}

\begin{thm}
    \label{thm:bous_main1}
    Let $T>0$, $U$ be a bounded open set with $\partial U\in \CC^{2}$, $\mathbf{u}_0 \in L^{\infty}(U;\BR^3)\cap \CC(U;\BR^3)$ 
    with $\diver_\mathbf{x} \mathbf({u}_0\chi_{U}) =0$ in the sense of distributions, 
    $\theta_0 \in L^{\infty}(Q) \cap \CC^{2}(Q)$ and $\Omega=\Omega(\mathbf{x}) \in L^{\infty}(U;\BR^3)$. 
    Then there exist infinitely many weak solutions to the Boussinesq equations in the sense of Definition \ref{dfn:weaksol_bous}.
\end{thm}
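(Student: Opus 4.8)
The plan is to follow the reformulation of Section~\ref{sec:reformulation} and recast \eqref{eq:bous_mom1}--\eqref{eq:bous_incom} as an abstract incompressible Euler system with a lower‑order forcing, to treat the temperature as a quantity \emph{subordinate} to the velocity, and then to run the De Lellis--Sz\'ekelyhidi convex integration scheme of Sections~\ref{sec:existence} and~\ref{sec:osc_lem_proof}. Writing $\mathbf{u}=(u,v,w)$, the momentum equations together with \eqref{eq:bous_incom} read $\partial_t\mathbf{u}+\diver_{\mathbf{x}}(\mathbf{u}\otimes\mathbf{u})+\Omega\times\mathbf{u}+\nabla_{\mathbf{x}}p=-\theta\,\mathbf{e}_3$, $\diver_{\mathbf{x}}\mathbf{u}=0$; absorbing $\tfrac13|\mathbf{u}|^2$ into a modified pressure $\tilde p$ brings this to the form $\partial_t\mathbf{u}+\diver_{\mathbf{x}}\bigl(\mathbf{u}\otimes\mathbf{u}-\tfrac13|\mathbf{u}|^2\,\mathbb I\bigr)+\nabla_{\mathbf{x}}\tilde p=\mathbf{f}$, $\diver_{\mathbf{x}}\mathbf{u}=0$, with forcing $\mathbf{f}=\mathbf{f}[\mathbf{u},\theta]:=-\Omega\times\mathbf{u}-\theta\,\mathbf{e}_3$. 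As usual on a bounded domain, one extends everything by zero outside $U$ so that $\mathbf{u}\chi_{Q}$ lives on a torus (or on $\BR^3$) and the no-flow condition \eqref{eq:velocitybc} is encoded by $\diver_{\mathbf{x}}(\mathbf{u}\chi_U)=0$, as in \cite{Chiodaroli, ChioFeirKrem}.

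For the temperature I would record the following. For \emph{any} $\mathbf{u}\in L^{\infty}(Q;\BR^3)$, equation \eqref{eq:bous_temp} with the homogeneous Dirichlet condition and datum $\theta_0$ has, by linear parabolic theory (e.g.\ maximal $L^p$-regularity for equations with bounded drift), a unique solution $\theta=\theta[\mathbf{u}]$ in the class appearing in Definition~\ref{dfn:weaksol_bous}, with all norms controlled \emph{solely} by $\|\mathbf{u}\|_{L^\infty(Q)}$, $\|\theta_0\|$ and $U$; by the Remark stated above, $\theta[\mathbf{u}]\in\CC(\overbar Q)$. Moreover, the solution map $\mathbf{u}\mapsto\theta[\mathbf{u}]$ is continuous from bounded subsets of $L^{\infty}(Q;\BR^3)$ endowed with the weak-$*$ topology into $\CC(\overbar Q)$: the uniform parabolic bound plus Aubin--Lions compactness gives strong convergence $\theta[\mathbf{u}_k]\to\theta$, hence $\diver_{\mathbf{x}}(\mathbf{u}_k\,\theta[\mathbf{u}_k])\to\diver_{\mathbf{x}}(\mathbf{u}\,\theta)$ in distributions, and uniqueness identifies $\theta=\theta[\mathbf{u}]$. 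This is the ingredient that will let me close the coupling in the limit of the iteration.

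Next I would set up the convex‑integration arena. Fix a smooth, strictly positive kinetic‑energy profile $e(t,\mathbf{x})$ on $\overbar Q$, chosen large enough on $U$ (depending on $\|\mathbf{u}_0\|_{L^\infty}$, $\|w_0\|_{L^\infty}$, $\|\Omega\|_{L^\infty}$ and the bound above for $\theta[\,\cdot\,]$) that the relaxed convex constraint set is nonempty, and build a first subsolution carrying the data $\mathbf{u}_0,w_0$ and boundary condition; all iterates then obey $|\mathbf{u}|\le C$ pointwise, so the construction runs inside a fixed bounded, metrizable, weak-$*$ compact set $X\subseteq L^{\infty}(Q;\BR^3)$ in which the relevant ``solution'' functionals are Baire residual. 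I then iterate the oscillatory lemma of Section~\ref{sec:osc_lem_proof} with a \emph{moving} forcing: at stage $k$ one has a subsolution $\mathbf{u}_k$, forms $\theta_k=\theta[\mathbf{u}_k]$ and $\mathbf{f}_k=\mathbf{f}[\mathbf{u}_k,\theta_k]$, and adds a highly oscillatory, compactly supported perturbation reducing the momentum defect relative to the right-hand side $\mathbf{f}_k$ while raising the ``wave'' energy towards $2e$ (equivalently, one adjoins $\mathbf{f}$ to the list of state variables, using that it ranges over a compact set). Since the oscillatory lemma tolerates an $L^1$ — here even $\CC(\overbar Q)$ — forcing, the scheme converges, yielding a dense $G_\delta$, hence uncountable, family of $\mathbf{u}\in X$ solving the Euler-type system with $\mathbf{f}=\mathbf{f}[\mathbf{u},\theta[\mathbf{u}]]$ and $|\mathbf{u}|^2=2e$ a.e., attaining $\mathbf{u}_0$ and $w_0$. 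For each such $\mathbf{u}$ I set $\theta=\theta[\mathbf{u}]$, recover $p$ (hence $\tilde p$) from the momentum balance, pass to the limit along the iteration to confirm \eqref{eq:bous_temp} via the strong compactness of $\theta_k$, and check the weak formulations \eqref{eq:bous_weakform_u}--\eqref{eq:bous_weak_incom} with the prescribed initial and boundary conditions; distinct energy profiles (or distinct elements of the residual set) give infinitely many solutions.

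The main obstacle is the self-consistency of the coupling: the forcing in the momentum equations is not given a priori but depends, through $\theta$, on the very velocity being constructed. One must therefore establish that the convex‑integration scheme is robust under updating $\mathbf{f}_k$ at each step and still converges, and that the parabolic solution operator $\mathbf{u}\mapsto\theta[\mathbf{u}]$ is compact/continuous enough — the $\mathbf{u}$-uniform maximal-regularity estimate and the resulting strong convergence of $\theta_k$ — to pass to the limit in the nonlinear transport term $\diver_{\mathbf{x}}(\mathbf{u}\,\theta)$. A secondary technical point is non-emptiness of the relaxed constraint set on the bounded domain $U$: the prescribed energy $e$ must dominate the contributions of $\mathbf{u}_0,w_0$ and of $\Omega\times\mathbf{u}$, $\theta\,\mathbf{e}_3$, which is where the boundary-adapted subsolution constructions of \cite{Chiodaroli, ChioFeirKrem} and \cite{FeirConvexInt} are used.
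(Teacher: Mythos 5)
Your overall strategy coincides with the paper's: the temperature is slaved to the velocity through the parabolic solution operator $\mathbf{u}\mapsto\Theta[\mathbf{u}]$, whose continuity from the weak topology on bounded sets of velocities into $\CC(\overbar{Q})$ is exactly the property the paper isolates (Lemma \ref{lem:parab_regul}), and the momentum equations are then treated as an abstract Euler system attacked by convex integration plus a Baire category argument. One packaging difference: the paper does not keep $\mathbf{f}=-\Omega\times\mathbf{u}-\Theta[\mathbf{u}]\,\mathbf{e}_3$ as a right-hand side, but inverts a Lam\'e system (Corollary \ref{cor:tosym}) to write the whole forcing as $\diver_{\mathbf{x}}\BH_{Bous}[\mathbf{u}]+\nabla_{\mathbf{x}}\Pi_{Bous}[\mathbf{u}]$ with $\BH_{Bous}[\mathbf{u}]$ traceless symmetric and continuous in the same topologies, so that the Tartar-type linear system is literally $\partial_t\mathbf{u}+\diver_{\mathbf{x}}\BV=0$, $\diver_{\mathbf{x}}\mathbf{u}=0$ and the standard oscillatory building blocks apply verbatim. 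This is equivalent to your formulation but cleaner, and it is what makes the verification of the hypotheses of Theorem \ref{thm:abstr_exist} a two-line computation.

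There is, however, a genuine gap: the step you yourself label ``the main obstacle'' --- robustness of the scheme under updating the forcing $\mathbf{f}_k$ at each stage --- is the actual mathematical content of the theorem, and your proposal does not close it. The paper's resolution rests on two points absent from your outline. (i) \emph{Causality}, hypothesis \eqref{eq:nonanticip}: since $\Theta[\mathbf{u}]$ solves an evolution equation and the Lam\'e inversion acts at each fixed time, a perturbation $\mathbf{w}_n$ supported in $(\tau,T)\times U$ leaves $\BH[\mathbf{u}+\mathbf{w}_n]$ and $\Pi[\mathbf{u}+\mathbf{w}_n]$ unchanged on $(0,\tau)\times U$; without this, updating the forcing could destroy the subsolution inequality arbitrarily close to $t=0$, where no uniform margin is available. (ii) The quantitative margin built into the definition of subsolution, $\essinf_{t\in(\tau,T),\,\mathbf{x}\in U}\left(\overbar{e}[\mathbf{u}]-e(\mathbf{u},\BV)\right)>0$ for every $\tau$: one first applies the oscillatory lemma with the coefficients \emph{frozen} at $\BH[\mathbf{u}]$ and $\overbar{e}[\mathbf{u}]-\delta$ (Lemma \ref{lem:oscill1}), and then the errors $r_n+t_n$ incurred by replacing the frozen coefficients by $\BH[\mathbf{u}+\mathbf{w}_n]$, $\overbar{e}[\mathbf{u}+\mathbf{w}_n]$ tend to $0$ uniformly on $[\tau,T]\times U$ (precisely by the weak-to-uniform continuity you established, combined with the $1$-Lipschitz property of $\lambda_{\max}$) and vanish identically on $(0,\tau)\times U$ by (i), hence are absorbed by half the margin for $n$ large. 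This upgrades the frozen-coefficient statement to Lemma \ref{lem:oscillatory} in one perturbation step, replacing your open-ended iteration with a statement to which the Baire argument of Lemma \ref{lem:category} applies directly. Supplying these two points would turn your outline into the paper's proof.
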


For the Boussinesq equations, the total energy is defined as the sum of the kinetic
and potential energy:
    \begin{equation*}
        E_{Bous}(t) = \int_{U}\frac{1}{2}|\mathbf{u}(t,\mathbf{x})|^2+z\theta(t,\mathbf{x}) \de{\mathbf{x}},
    \end{equation*}
see also \cite{winters}.
Referring to \cite{diaz}, we recall that the Boussinesq equations violate the principle of conservation of total energy.
The quantity $E_{Bous}$ would be conserved if the heat dissipation were neglected.

\begin{dfn}
    We say that a weak solution of the Boussinesq equations satisfies 
    the \emph{strong energy inequality} if $E_{Bous}(t)$ is non-increasing on $[0,T)$. We also call such solutions \emph{dissipative}.
\end{dfn}
   
Let us mention that the weak solutions given by Theorem \ref{thm:bous_main1} are violating the strong energy inequality, particularly
    \[
        \liminf_{t\to 0^+} E_{Bous}(t)>E_{Bous}(0).
    \]
   
\begin{thm}
    \label{thm:bous_main2}
    Let $\Omega \in L^{\infty}(U;\BR^3))$ and $\theta_0\in L^{\infty}(U)\cap \CC^{2}(U)$. 
    Then there exists $\mathbf{u}_0\in L^{\infty}(U;\BR^3)$ for which we can find 
    infinitely many weak dissipative solutions of the Boussinesq equations emanating from $\mathbf{u}_0$.
\end{thm}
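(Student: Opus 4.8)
The plan is to combine the convex-integration scheme underlying Theorem~\ref{thm:bous_main1} with the construction of ``wild'' initial data of \emph{decreasing} energy due to De Lellis and Sz\'ekelyhidi. Recall from Section~\ref{sec:reformulation} that a weak solution of \eqref{eq:boussinesq} amounts to a velocity field $\mathbf{u}\in L^{\infty}(Q;\BR^3)$ together with the unique solution $\theta=\theta[\mathbf{u}]$ of the linear uniformly parabolic problem \eqref{eq:bous_temp}, \eqref{eq:tempbc} with datum $\theta_0$, such that $\mathbf{u}$ solves, distributionally, the incompressible Euler system with the bounded force $\mathbf{f}[\mathbf{u}]=-\Omega\times\mathbf{u}-(0,0,\theta[\mathbf{u}])$ and the no--flow condition \eqref{eq:velocitybc}. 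Since $\theta_0\in\CC^{2}(U)$ vanishing on $\partial U$ lies in the trace space of the parabolic problem, $\|\theta[\mathbf{u}]\|_{L^\infty(Q)}\le\|\theta_0\|_{L^\infty(U)}$ by the maximum principle and, by maximal $L^{q}$-regularity for large $q$, $\theta[\mathbf{u}]\in\CC^{1}(\overbar{Q})$ with $\|\theta[\mathbf{u}]\|_{\CC^{1}(\overbar{Q})}$ bounded in terms of $\|\mathbf{u}\|_{L^{\infty}(Q)}$ alone; together with the Aubin--Lions lemma this makes $\mathbf{u}\mapsto\theta[\mathbf{u}]$ continuous from bounded subsets of $L^{\infty}(Q;\BR^3)$ with the weak-$*$ topology into $\CC(\overbar{Q})$, and, since $\mathbf{u}\mapsto-\Omega\times\mathbf{u}$ is also weakly-$*$ continuous, the coupling is compatible with the Baire-category argument of Sections~\ref{sec:existence}--\ref{sec:osc_lem_proof}: given any strict subsolution and any compatible continuous kinetic-energy profile $e$, that argument yields infinitely many weak solutions with $\int_U\tfrac12|\mathbf{u}(t)|^2\de{\mathbf{x}}=e(t)$.

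I would first record the energy balance along such solutions. As $\mathbf{u}$ is divergence free with $\mathbf{u}\cdot\eta=0$ on $\partial U$ and $\theta=\theta[\mathbf{u}]$ satisfies \eqref{eq:bous_temp} a.e.\ with $\theta|_{\partial U}=0$, integration by parts gives, with $E_{pot}(t):=\int_U z\,\theta(t)\de{\mathbf{x}}$,
\[
\frac{d}{dt}E_{pot}(t)=\int_U \theta\,w\,\de{\mathbf{x}}+\int_{\partial U} z\,(\partial_\eta\theta)\,\big(\lambda_1|\eta_H|^2+\lambda_2\eta_z^2\big)\,\de S ,
\]
and both terms are bounded uniformly in $t$ and over the whole family of solutions: the first by $\|\theta_0\|_{L^\infty}\|\mathbf{u}\|_{L^\infty(Q)}|U|$, the second --- a boundary integral of \emph{first} derivatives of $\theta$ only --- by $C\|\nabla\theta\|_{\CC(\overbar{Q})}$, both hence bounded by a constant $K$ depending only on $\theta_0$, $\lambda_1$, $\lambda_2$, $U$ and the uniform $L^{\infty}$-bound for the velocities (which is fixed once the subsolution is). Thus $|\tfrac{d}{dt}E_{pot}(t)|\le K$ a.e., uniformly over the family.

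Then I would construct the subsolution and the initial datum. Adapting the De Lellis--Sz\'ekelyhidi construction of dissipative wild data to the bounded domain $U$ with the no--flow condition, and absorbing the bounded force $\mathbf{f}[\mathbf{u}]$ as a lower-order term, I produce a strict subsolution $(\mathbf{u}_*,\mathbb{U}_*,q_*)$ on $(0,T)\times U$ --- $\mathbf{u}_*\in L^{\infty}$, $\diver_\mathbf{x}\mathbf{u}_*=0$, $\mathbf{u}_*\cdot\eta=0$ on $(0,T)\times\partial U$, continuous in time up to $t=0$ with $\mathbb{U}_*(0,\cdot)=0$ --- whose generalized energy $\mathcal{E}_*(t):=\int_U\big(\tfrac12|\mathbf{u}_*(t)|^2+\tfrac32\lambda_{\max}(\mathbb{U}_*(t))\big)\de{\mathbf{x}}$ is strictly decreasing on $[0,T)$ and, after suitably rescaling and recomputing $K$ for the resulting family, satisfies $\mathcal{E}_*(0)-\sup_{t\in(0,T)}\mathcal{E}_*(t)>KT$. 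I set $\mathbf{u}_0:=\mathbf{u}_*(0,\cdot)\in L^{\infty}(U;\BR^3)$, choose a continuous, strictly decreasing $e\colon[0,T)\to(0,\infty)$ with $e(0)=\int_U\tfrac12|\mathbf{u}_0|^2\de{\mathbf{x}}=\mathcal{E}_*(0)$, $e(t)>\mathcal{E}_*(t)$ for $t\in(0,T)$, and $e'\le -K$ on $[0,T)$ --- such $e$ exists exactly by the last inequality --- and run the scheme of Sections~\ref{sec:existence}--\ref{sec:osc_lem_proof}. This produces infinitely many velocity fields $\mathbf{u}$ with $\mathbf{u}(0,\cdot)=\mathbf{u}_0$ (the defect vanishing there) and $\int_U\tfrac12|\mathbf{u}(t)|^2\de{\mathbf{x}}=e(t)$; each, with $\theta=\theta[\mathbf{u}]$ and the recovered pressure, is a weak solution of \eqref{eq:boussinesq} emanating from $\mathbf{u}_0$, and $\tfrac{d}{dt}E_{Bous}(t)=e'(t)+\tfrac{d}{dt}E_{pot}(t)\le -K+K=0$ a.e., so $E_{Bous}$ is non-increasing on $[0,T)$ and all these solutions are dissipative.

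The hard part will be the construction of the subsolution. A strict subsolution with strictly \emph{decreasing} generalized energy cannot emanate from a smooth $\mathbf{u}_0$ --- this is exactly why the solutions of Theorem~\ref{thm:bous_main1} have $\liminf_{t\to0^+}E_{Bous}(t)>E_{Bous}(0)$ --- so one must run the localized De Lellis--Sz\'ekelyhidi building-block construction inside $U$ without violating \eqref{eq:velocitybc}, and then arrange the quantitative balance $\mathcal{E}_*(0)-\sup_t\mathcal{E}_*(t)>KT$ while keeping $e$ positive and above $\mathcal{E}_*$; since $K$ itself grows with $\mathcal{E}_*(0)$ through the uniform $L^{\infty}$-bound on the velocities, closing this balance calls for a careful design of the subsolution (a sufficiently large and rapid initial relaxation of its energy). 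Establishing the uniform $\CC^{1}(\overbar{Q})$-bound for $\theta[\mathbf{u}]$ up to the boundary and up to $t=0$, and checking that the $\mathbf{u}$-dependent force $\mathbf{f}[\mathbf{u}]$ spoils neither the subsolution inequality nor the compactness the Baire scheme needs, are the remaining points of care.
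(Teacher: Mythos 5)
Your route is genuinely different from the paper's and, as you yourself flag, it is incomplete at its core: the entire argument funnels into the construction of a strict subsolution $(\mathbf{u}_*,\mathbb{U}_*)$ with strictly decreasing generalized energy, vanishing defect at $t=0$, and the quantitative gap $\mathcal{E}_*(0)-\sup_t\mathcal{E}_*(t)>KT$, and that construction is left as a black box with an acknowledged circularity ($K$ grows with the uniform $L^{\infty}$ bound on the velocities, which grows with $\mathcal{E}_*(0)$). Producing an initial datum from which a subsolution with no defect at $t=0$ emanates is precisely the hard content of the theorem; in the paper it is supplied by Lemma \ref{thm:strong continuity} (Feireisl's Theorem 6.1), which yields a dense set of times $\tau$ at which some subsolution saturates $\frac12|\mathbf{u}(\tau,\cdot)|^2=\overbar{e}[\mathbf{u}](\tau,\cdot)$ a.e.; the datum is then $\mathbf{u}_0=\mathbf{u}(\tau,\cdot)$ and the subsolution set for this datum is shown nonempty by a time shift. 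You neither invoke nor reprove such a lemma. A second, independent gap is the uniform bound $|\frac{d}{dt}E_{pot}|\le K$: your boundary flux term requires $\|\nabla\theta[\mathbf{u}]\|_{\CC(\overbar{Q})}$ to be bounded up to the parabolic boundary uniformly over the whole family of solutions, whereas the regularity actually available (Lemma \ref{lem:parab_regul} and the embedding \eqref{eq:embToCont}) gives only $\theta\in\CC([0,T];W^{1,q}_0(U))$; a uniform $\CC^{1}(\overbar{Q})$ bound up to $t=0$ and $\partial U$ would need compatibility conditions on $\theta_0$ that are not assumed.

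More importantly, the whole quantitative apparatus is unnecessary, because the scheme does not prescribe the kinetic energy alone: it enforces the pointwise identity \eqref{eq:energy constraint}, $\frac32\Pi[\mathbf{u}]+\frac12|\mathbf{u}|^2=\frac32 Z(t)$ a.e., and for the Boussinesq reformulation $\frac32\Pi_{Bous}[\mathbf{u}]=z\Theta[\mathbf{u}]=z\theta$, so the integrand of $E_{Bous}$ is exactly the constrained quantity and $E_{Bous}(t)=\frac32 Z(t)|U|$ for all $t\in(0,T)$ with no need to estimate $\frac{d}{dt}\int_U z\theta\,\de{\mathbf{x}}$ at all. Taking $Z\equiv C_Z$ constant makes $E_{Bous}$ constant on $(0,T)$; the only remaining issue is equality at $t=0$, which is exactly what the wild datum from Lemma \ref{thm:strong continuity} provides. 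So where you fight the potential energy with a quantitatively decreasing kinetic profile, the paper folds the potential energy into the prescribed pressure head and gets a constant (hence non-increasing) total energy for free. Your decomposition could in principle be made to work, but only after supplying the missing subsolution construction and the uniform gradient bound on $\theta$, neither of which is routine.
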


\subsection{The primitive equations}
Analogously, we present the definition of the weak solutions to \eqref{eq:primeqs}:
\begin{dfn}
    \label{dfn:weaksol} 
    We call the quintet of functions $(u,v,w,p,\theta)$ a \emph{weak solution of the inviscid primitive equations} 
    with \eqref{eq:bc},  \eqref{eq:ic_uv} if
    \begin{itemize}
        \item   $\mathbf{u}=(u,v,w) \in L^{2}(Q;\BR^3)$, $u$, $v\in \CC([0,T];L^2_w(U))$, $p\in L^1(Q)$, $\partial_z p \in L^1(Q)$ and equations
                \eqref{eq:bous_weakform_u} and \eqref{eq:bous_weakform_v} 
        are satisfied for any $\phi_1$, $\phi_2 \in \CD([0,T)\times U)$,        
        \item $\mathbf{u}\chi_{Q}$ solves \eqref{eq:bous_weak_incom},
        
        \item $\theta\in W^{1,p}\left((0,T); L^p(U)\right)\cap L^p\left((0,T);W^{2,p}(U)\cap W^{1,p}_0(U)\right)$ for a $p\in (1,\infty)$ and \eqref{eq:temp} holds almost everywhere in $Q$ and $\theta(0,\cdot)=\theta_0(\cdot)$ in the sense of time traces.
    
        \item equation \eqref{eq:mom3} holds for the weak derivative of $p$ almost everywhere in $Q$.
        \end{itemize}
\end{dfn}

As we will see, the problem of finding weak solutions of the primitive equations is highly underdetermined. Let us fix a function $p$ and supplement the system by an equation describing the evolution of $w$:
\begin{align}
    \label{eq:suplw}
    \partial_t w + \mathbf{u}\cdot\nabla_{\mathbf{x}} w +\Omega_x v-\Omega_y u+\partial_z p=0.
\end{align}

The equations \eqref{eq:mom1}, \eqref{eq:mom2} and \eqref{eq:suplw} can be recast in the usual vector form
\begin{align*}
    \partial_t \mathbf{u} + \diver_\mathbf{x}(\mathbf{u}\otimes \mathbf{u})+\Omega \times \mathbf{u}+\nabla_{\mathbf{x}} p = 0,
\end{align*}             
where $\mathbf{u}=(u,v,w)$. We use the notion \emph{extended primitive equations} for the system \eqref{eq:primeqs} coupled with 
\eqref{eq:suplw} together with an additional initial condition for $w$. 
For the sake of completeness, we present the definition of the corresponding weak solution.
\begin{dfn}
    We call $(\mathbf{u},p,\theta)$ a \emph{weak solution of the extended primitive equations} with \eqref{eq:bc}, \eqref{eq:ic_uv},
    \eqref{eq:ic_w}  if 
    \begin{itemize}
        \item $(\mathbf{u},p,\theta)$ is a weak solution of the inviscid primitive equations with \eqref{eq:bc},  \eqref{eq:ic_uv} 
        \item $w\in \CC([0,T];L^2_{w}(U))$ and 
            \begin{align}
                \label{eq:weakform_w} 
                &\int_{0}^{T}\int_{U}w\partial_t \phi_3\de{\mathbf{x}}\de{t}
                +\int_{0}^{T}\int_{U}w   \mathbf{u} \cdot \nabla_{\mathbf{x}}\phi_3\de{\mathbf{x}}\de{t}
                +\int_{U}w_0 (\cdot)\phi_3(0,\cdot)\de{\mathbf{x}}
                \\
                \nonumber
                &\quad\quad+\int_{0}^{T}\int_{U} (-\Omega_x v+\Omega_y u)\phi_3\de{\mathbf{x}}\de{t}+\int_{0}^{T}\int_{U}p \partial_z \phi_3\de{\mathbf{x}}\de{t}
                  =  0
            \end{align}
        is satisfied for any $\phi_3 \in \CD([0,T)\times U)$.
    \end{itemize}
\end{dfn}
\begin{rem}
Because of the well-posedness result, it seems to be unreasonable to work with the extended primitive equations.
However, all the results following the approach of De Lellis and Sz\'ekelyhidi (see e.\,g.\,\cite{DeleSzekAnnals}, \cite{DeleSzekArch}, \cite{DonaFeirMarc} or \cite{ChioFeirKrem}) are foreshadowing 
that weak formulations of inviscid problems in fluid dynamics are surprisingly highly underdetermined. The main results of this paper, namely Theorem \ref{thm:main1}, \ref{thm:main2} and Corollary \ref{cor:main1}, 
are in agreement with this observation.
\end{rem}

\begin{thm}
    \label{thm:main1}
    Let $T>0$, $U$ be a bounded open set with $\partial U\in \CC^{2}$, $\mathbf{u}_0 \in L^{\infty}(U;\BR^3)\cap \CC(U;\BR^3)$
     with $\diver_\mathbf{x} \mathbf({u}_0\chi_{U}) =0$ in the sense of distributions, 
     $\theta_0 \in L^{\infty}(Q) \cap \CC^{2}(Q)$ and $\Omega\in L^{\infty}(U;\BR^3)$. 
    Then there exist infinitely many weak solutions to the extended primitive equations with 
    \eqref{eq:bc}, \eqref{eq:ic_uv} and \eqref{eq:ic_w}.
\end{thm}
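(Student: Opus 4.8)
The plan is to recast the extended primitive equations, via the reformulation of Section~\ref{sec:reformulation}, as an abstract incompressible Euler system with a forcing that is a nonlocal function of the unknown velocity, and then to run the convex integration machinery of De Lellis and Sz\'ekelyhidi (\cite{DeleSzekAnnals}, \cite{DeleSzekArch}), as developed in Sections~\ref{sec:existence} and \ref{sec:osc_lem_proof}, together with a Baire category argument. Writing $\mathbf{u}=(u,v,w)$, the equations \eqref{eq:mom1}, \eqref{eq:mom2} and \eqref{eq:suplw} are exactly $\partial_t\mathbf{u}+\diver_{\mathbf{x}}(\mathbf{u}\otimes\mathbf{u})+\Omega\times\mathbf{u}+\nabla_{\mathbf{x}}p=0$, so a weak solution of the extended primitive equations amounts to a field $\mathbf{u}\in L^{\infty}(Q;\BR^3)$ which is divergence free, tangent to $\partial U$, belongs to $\CC([0,T];L^2_w(U))$ with $\mathbf{u}(0,\cdot)=\mathbf{u}_0$, and solves this momentum equation for some pressure $p$ with $\partial_z p=-\theta$, where $\theta=\theta[\mathbf{u}]$ is the unique solution of \eqref{eq:temp}, \eqref{eq:tempbc}, $\theta(0,\cdot)=\theta_0$ driven by $\mathbf{u}$ (the remaining integrability requirements, e.g.\ $p,\partial_z p\in L^1(Q)$, then come for free). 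I would first fix a continuous energy profile $e\in\CC(\overline{Q})$, large in the supremum norm but with bounded gradient (for instance a large constant plus a small fixed smooth bump), so that $e>\tfrac12|\mathbf{u}_0|^2$ on $\overline{Q}$; every admissible $\mathbf{u}$ with $\tfrac12|\mathbf{u}|^2=e$ a.e.\ then satisfies the uniform bound $\|\mathbf{u}\|_{L^{\infty}}\le M:=\sqrt{2\|e\|_{\CC(\overline{Q})}}$.

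Next I would record the behaviour of the temperature map. For $\mathbf{u}\in L^{\infty}(Q;\BR^3)$ with $\|\mathbf{u}\|_{\infty}\le M$, $\diver\mathbf{u}=0$ and $\mathbf{u}\cdot\eta=0$ on $\partial U$ — so the drift can be put in divergence form $\mathbf{u}\cdot\nabla_{\mathbf{x}}\theta=\diver_{\mathbf{x}}(\mathbf{u}\,\theta)$ — standard parabolic theory (a one-step bootstrap, starting from $\theta_0$ and the $L^{\infty}$-bound on $\mathbf{u}$) provides a unique $\theta[\mathbf{u}]$ with the regularity demanded in Definition~\ref{dfn:weaksol}; moreover $\|\theta[\mathbf{u}]\|_{L^{\infty}}\le\|\theta_0\|_{L^{\infty}}$ by the maximum principle, $\theta[\mathbf{u}]\in\CC(\overline{Q})$ with a modulus of continuity independent of $\mathbf{u}$, and the maps $\mathbf{u}\mapsto\theta[\mathbf{u}]$ and $\mathbf{u}\mapsto\nabla_{\mathbf{x}}\theta[\mathbf{u}]$ are continuous from bounded subsets of $L^{\infty}$ with the weak-$*$ topology into $\CC(\overline{Q})$ and into $L^{q}(Q)$ respectively (Aubin--Lions compactness plus uniqueness for the parabolic problem). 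In particular the set of all admissible $\theta[\mathbf{u}]$ is a precompact subset of $\CC(\overline{Q})$. These facts are collected in Section~\ref{sec:appendix}.

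I would then pass to the modified pressure $q=p+\tfrac13|\mathbf{u}|^2$: on the set $\{\tfrac12|\mathbf{u}|^2=e\}$ the constraint $\partial_z p=-\theta$ reads $\partial_z q=\tfrac23\partial_z e-\theta[\mathbf{u}]$ and the momentum equation becomes $\partial_t\mathbf{u}+\diver_{\mathbf{x}}\!\bigl(\mathbf{u}\otimes\mathbf{u}-\tfrac13|\mathbf{u}|^2 I\bigr)+\Omega\times\mathbf{u}+\nabla_{\mathbf{x}}q=0$. Following \cite{DeleSzekAnnals}, \cite{DeleSzekArch} and the Cauchy-problem adaptations \cite{Chiodaroli}, \cite{ChioFeirKrem}, \cite{DonaFeirMarc}, \cite{FeirConvexInt}, I would introduce the set $X_0$ of subsolutions: divergence-free fields $\mathbf{v}$, tangent to $\partial U$, with $\mathbf{v}(0,\cdot)=\mathbf{u}_0$, $\mathbf{v}\in\CC([0,T];L^2_w(U))$ and $\tfrac12|\mathbf{v}|^2<e$ a.e., carrying a symmetric trace-free field $\mathbb{U}$ that solves $\partial_t\mathbf{v}+\diver_{\mathbf{x}}\mathbb{U}+\Omega\times\mathbf{v}+\nabla_{\mathbf{x}}q[\mathbf{v}]=0$ in the sense of distributions, where $q[\mathbf{v}]=\tfrac23 e-\int_{z_0}^{z}\theta[\mathbf{v}](t,x,y,s)\,ds+P$ (with $P=P(t,x,y)$ fixed so that the inverse divergence is solvable) and $\mathbb{U}$ is produced from $\mathbf{v}$ and $q[\mathbf{v}]$ by an inverse-divergence operator as in \cite{DeleSzekAnnals}, subject to the pointwise inequality $\tfrac32\lambda_{\max}(\mathbf{v}\otimes\mathbf{v}-\mathbb{U})<e$. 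One checks that $X_0\neq\emptyset$ — for instance $\mathbf{v}\equiv\mathbf{u}_0$ qualifies once $e$ has been taken large enough, since then $\mathbb{U}\in L^{\infty}$ with a norm governed by $\|\Omega\|_{\infty}$, $\|\mathbf{u}_0\|_{\infty}$, $\|\nabla_{\mathbf{x}}e\|_{\infty}$ and the parabolic bounds for $\theta[\mathbf{u}_0]$, none of which is increased by raising $e$ by a constant — and that the weak-$*$ closure $\overline{X_0}$ is bounded, hence metrizable, and that $\mathbf{v}\mapsto(\mathbb{U}[\mathbf{v}],q[\mathbf{v}])$ is weak-$*$ continuous on it by the previous paragraph.

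Finally I would invoke the oscillatory lemma of Section~\ref{sec:osc_lem_proof}: at any subsolution with $\tfrac12|\mathbf{v}|^2<e$ on a positive-measure set, one adds highly oscillatory, divergence-free, mean-zero perturbations vanishing at $t=0$, corrects $\mathbb{U}$ accordingly, and absorbs the ensuing changes of $\Omega\times\mathbf{v}$ and $\nabla_{\mathbf{x}}q[\mathbf{v}]$ — bounded in, say, $L^{q}(Q)$ uniformly, thanks to the precompactness of $\theta[\mathbf{v}]$ — back into the stress through the inverse-divergence operator, so as to increase $\iint\tfrac12|\mathbf{v}|^2$ while preserving all the subsolution constraints. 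Since $\mathbf{v}\mapsto\iint(e-\tfrac12|\mathbf{v}|^2)$ is weak-$*$ lower semicontinuous and nonnegative on $\overline{X_0}$, a standard Baire category argument shows that the set of $\mathbf{u}\in\overline{X_0}$ with $\tfrac12|\mathbf{u}|^2=e$ a.e.\ is residual, in particular infinite. For each such $\mathbf{u}$, and any sequence $\mathbf{v}_n\in X_0$ converging to $\mathbf{u}$ weakly-$*$, one has $\theta[\mathbf{v}_n]\to\theta[\mathbf{u}]$ in $\CC(\overline{Q})$, so the identity $\partial_z q[\mathbf{v}_n]=\tfrac23\partial_z e-\theta[\mathbf{v}_n]$ passes to the limit; setting $p:=q-\tfrac23 e$ and $\theta:=\theta[\mathbf{u}]$ one recovers $\partial_z p=-\theta$ together with $\partial_t\mathbf{u}+\diver_{\mathbf{x}}(\mathbf{u}\otimes\mathbf{u})+\Omega\times\mathbf{u}+\nabla_{\mathbf{x}}p=0$, i.e.\ the weak formulations \eqref{eq:bous_weakform_u}, \eqref{eq:bous_weakform_v}, \eqref{eq:weakform_w}, and all the remaining requirements of the definitions of weak and of extended weak solution. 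The delicate point, for which the refinements of \cite{DonaFeirMarc} and \cite{FeirConvexInt} over the classical scheme are needed, is precisely this self-referential forcing $-\Omega\times\mathbf{u}-\nabla_{\mathbf{x}}\!\int\theta[\mathbf{u}]$: it depends on the unknown and must be re-inserted into the stress at every stage of the iteration without destroying the strict pointwise inequality; what makes it harmless is the parabolic smoothing, which confines $\theta[\mathbf{u}]$ to one fixed precompact subset of $\CC(\overline{Q})$ and renders $\mathbf{u}\mapsto\theta[\mathbf{u}]$ weak-$*$-to-strongly continuous, so that the oscillatory lemma is, uniformly along the iteration, applied to a controlled perturbation of the plain Euler problem. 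The genuinely domain-dependent issues — an inverse-divergence operator compatible with $\mathbf{u}\cdot\eta=0$, and the attainment of the initial datum consistent with the energy jump $\liminf_{t\to0^+}\int_U\tfrac12|\mathbf{u}|^2>\int_U\tfrac12|\mathbf{u}_0|^2$ noted above — are handled as in \cite{Chiodaroli} and \cite{ChioFeirKrem}.
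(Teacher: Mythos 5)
Your proposal follows essentially the same route as the paper: recast the extended primitive equations as an abstract Euler-type system with the Coriolis term and the hydrostatic pressure $-\int^z\Theta[\mathbf{u}]$ absorbed into a traceless symmetric stress via an inverse-divergence (Lam\'e) operator, exploit the weak-$*$-to-strong continuity and compactness of $\mathbf{u}\mapsto\Theta[\mathbf{u}]$ from parabolic regularity to run the oscillatory lemma as a perturbation of the constant-coefficient case, and conclude by a Baire category argument on the set of subsolutions. The only differences are bookkeeping ones (a spatially varying energy profile $e$ in place of the paper's $Z(t)$, and keeping the forcing explicit in the linear relation rather than inside the $\lambda_{\max}$ constraint), which do not change the argument.
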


\begin{cor}
    \label{cor:main1}
    Let $\theta_0 \in L^{\infty}(Q) \cap \CC^{2}(Q)$, $\Omega=\Omega(\mathbf{x})\in L^{\infty}(U;\BR^3)$ 
    and let $u_0$, $v_0 \in L^{\infty}(U)\cap \CC(U)$ 
    be such that exists $w_0 \in L^{\infty}(U)\cap \CC(U)$ satisfying 
    \begin{equation}
      \label{eq:extra_cond}
        \diver_\mathbf{x} ((u_0,v_0,w_0)\chi_U)=0
    \end{equation}
    in the sense of distributions on $\BR^3$. Then there exist infinitely many weak 
    solutions to the primitive equations with \eqref{eq:bc} and \eqref{eq:ic_uv}.
\end{cor}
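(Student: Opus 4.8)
The plan is to obtain Corollary~\ref{cor:main1} as an essentially immediate consequence of Theorem~\ref{thm:main1}, exploiting the fact that the notion of weak solution of the \emph{extended} primitive equations was designed precisely so as to refine the notion of weak solution of the plain primitive equations: by definition every weak solution of the extended system is in particular a weak solution of the inviscid primitive equations with \eqref{eq:bc} and \eqref{eq:ic_uv}.

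First I would use the hypothesis to pick one vertical component $w_0\in L^{\infty}(U)\cap\CC(U)$ with $\diver_{\mathbf{x}}((u_0,v_0,w_0)\chi_U)=0$ in $\CD'(\BR^3)$, and set $\mathbf{u}_0:=(u_0,v_0,w_0)$. Then $\mathbf{u}_0\in L^{\infty}(U;\BR^3)\cap\CC(U;\BR^3)$ and $\diver_{\mathbf{x}}(\mathbf{u}_0\chi_U)=0$ in the sense of distributions, while $\theta_0$ and $\Omega$ satisfy the regularity required in Theorem~\ref{thm:main1}; hence, under the standing assumptions on $T$ and on $U$ (in particular $\partial U\in\CC^{2}$), all hypotheses of that theorem hold for the datum $\mathbf{u}_0$.

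Next I would invoke Theorem~\ref{thm:main1} to produce infinitely many weak solutions $(\mathbf{u},p,\theta)$ of the extended primitive equations with \eqref{eq:bc}, \eqref{eq:ic_uv} and \eqref{eq:ic_w}. By the very definition of such a solution, each $(\mathbf{u},p,\theta)$ is in particular a weak solution of the inviscid primitive equations with \eqref{eq:bc} and \eqref{eq:ic_uv}; note that \eqref{eq:ic_uv} involves only $u_0$, $v_0$, $\theta_0$, so the auxiliary choice of $w_0$ does not enter the conclusion. Discarding the extra equation \eqref{eq:weakform_w} and the extra initial condition \eqref{eq:ic_w}, we are left with infinitely many weak solutions of the primitive equations with the prescribed data, which is the asserted conclusion.

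The only point that still has to be checked is that this family remains genuinely infinite once $w$ and its evolution equation are forgotten, i.e.\ that distinct extended solutions do not collapse to finitely many primitive solutions. This holds because the non-uniqueness in Theorem~\ref{thm:main1} already shows up in the velocity field $\mathbf{u}$ on $Q$ (it is generated by convex integration applied to the full vector field $(u,v,w)$), and $\mathbf{u}$ is part of the datum of a primitive-equation weak solution, so distinct extended solutions restrict to distinct primitive ones. I do not expect a real obstacle here; the only subtlety worth a sentence is that the hypothesis asserts merely the \emph{existence} of an admissible $w_0$, and for the statement it suffices that fixing any single such $w_0$ already yields infinitely many solutions (varying $w_0$ over its admissible values, when more than one exists, would only enlarge the family).
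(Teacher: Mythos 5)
Your proposal is correct and matches the paper's intent exactly: the corollary is stated without a separate proof precisely because, once an admissible $w_0$ is chosen, Theorem~\ref{thm:main1} applies to $\mathbf{u}_0=(u_0,v_0,w_0)$ and every weak solution of the extended primitive equations is by definition a weak solution of the primitive equations with \eqref{eq:bc} and \eqref{eq:ic_uv}. Your extra remark that distinct extended solutions remain distinct as primitive solutions (since the full field $\mathbf{u}$, including $w$, is part of the quintet in Definition~\ref{dfn:weaksol}) is a sensible clarification the paper leaves implicit.
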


\begin{rem}
    The technical assumption on $u_0$ and $v_0$ is needed only because we are considering boundary conditions
    \eqref{eq:velocitybc}. If we took $U=\BT^3$ then the additional condition leading to \eqref{eq:extra_cond}
    would be $\partial_x u_0+\partial_y v_0 \in L^{\infty}(U)\cap \CC(U)$.
\end{rem}

To the best of our knowledge, there are no a priori estimates on $(u,v,w)$ in the case of inviscid primitive equations.
Still, it is possible to find initial data for which there exist infinitely 
many weak solutions of the primitive equations satisfying the conservation of the kinetic energy or which 
are dissipating the mechanical energy. 
Let us define
    \begin{equation*}
        E_{Prim}(t)= \int_{U}\frac{1}{2}\left(|u(t,\mathbf{x})|^2+|v(t,\mathbf{x})|^2+|w(t,\mathbf{x})|^2\right)
               \de{\mathbf{x}}.
    \end{equation*}
    
\begin{dfn}
    We say that a weak solution of the extended primitive equations  
    or primitive equations satisfies the \emph{strong energy inequality} if $E_{Prim}(t)$ is non-increasing on $[0,T)$. 
    We also call such solutions \emph{dissipative}.
\end{dfn}

    Similarly to the previous section the weak solutions given by Theorem \ref{thm:main1} are violating the strong energy inequality, particularly
    \[
        \liminf_{t\to 0^+} E_{Prim}(t)>E_{Prim}(0).
    \]
                                                                    
\begin{thm}
    \label{thm:main2}
    There exists $\mathbf{u}_0\in L^{\infty}(U;\BR^3)$ for which we can find infinitely 
    many weak dissipative solutions of the extended primitive equations emanating from $\mathbf{u}_0$.
\end{thm}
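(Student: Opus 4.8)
The plan is to decouple the temperature, freeze a pressure, recast the velocity equation as the abstract Euler-type problem of Section~\ref{sec:reformulation}, and then use the convex-integration machinery behind Theorem~\ref{thm:main1}---which constructs solutions with a \emph{prescribed} pointwise kinetic energy density---while choosing the initial datum so that this density yields a non-increasing energy. Given $\theta_0$ and $\Omega$ as usual, first solve the linear parabolic problem~\eqref{eq:temp}, \eqref{eq:tempbc} with $\theta(0,\cdot)=\theta_0$; the solution $\theta$ has the regularity required in Definition~\ref{dfn:weaksol} and, by the continuity property recorded in the Remark after~\eqref{eq:ic_w}, is bounded and lies in $\CC(\overbar{Q})$. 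Fix any $p$ with $\partial_z p=-\theta$, e.g.\ $p(t,\mathbf{x})=-\int_0^z\theta(t,x,y,s)\,\de{s}$. With $\theta$ and $p$ frozen, a weak solution of the extended primitive equations emanating from $\mathbf{u}_0$ is exactly a weak solution of $\partial_t\mathbf{u}+\diver_\mathbf{x}(\mathbf{u}\otimes\mathbf{u})+\Omega\times\mathbf{u}+\nabla_\mathbf{x}p=0$, $\diver_\mathbf{x}\mathbf{u}=0$, with $\mathbf{u}\cdot\eta=0$ on $\partial U$ and $\mathbf{u}(0,\cdot)=\mathbf{u}_0$: a forced Euler system with known forcing $-\nabla_\mathbf{x}p$ and zeroth-order perturbation $\Omega\times\mathbf{u}$.

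By the reformulation of Section~\ref{sec:reformulation} and the oscillatory lemma from Sections~\ref{sec:existence}--\ref{sec:osc_lem_proof}, it is enough to produce an admissible \emph{subsolution}: a divergence-free $\mathbf{v}$, a symmetric traceless matrix field $M$ and a scalar $q$, smooth on $(0,T]\times U$ and localized in a box $B\Subset U$, solving $\partial_t\mathbf{v}+\diver_\mathbf{x}M+\Omega\times\mathbf{v}+\nabla_\mathbf{x}q+\nabla_\mathbf{x}p=0$, $\diver_\mathbf{x}\mathbf{v}=0$, together with a nonnegative $e\in\CC(\overbar{Q})$ such that $\tfrac{3}{2}\lambda_{\max}(\mathbf{v}\otimes\mathbf{v}-M)<e$ on $\{t>0\}$, with \emph{no gap at $t=0$} ($M(0,\cdot)=\mathbf{v}(0,\cdot)\otimes\mathbf{v}(0,\cdot)-\tfrac{1}{3}|\mathbf{v}(0,\cdot)|^2 I$ and $e(0,\cdot)=|\mathbf{v}(0,\cdot)|^2$); one then sets $\mathbf{u}_0:=\mathbf{v}(0,\cdot)$. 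The oscillatory lemma yields infinitely many $\mathbf{u}\in\CC([0,T];L^2_w(U))$ with $\diver_\mathbf{x}\mathbf{u}=0$, $\mathbf{u}\cdot\eta=0$ on $\partial U$, $\mathbf{u}(0,\cdot)=\mathbf{u}_0$, solving the system weakly with $|\mathbf{u}(t,\mathbf{x})|^2=e(t,\mathbf{x})$ a.e., hence $E_{Prim}(t)=\tfrac{1}{2}\int_U e(t,\cdot)\,\de{\mathbf{x}}$; everything then comes down to choosing $e$ with $t\mapsto\int_U e(t,\cdot)\,\de{\mathbf{x}}$ non-increasing on $[0,T)$ and equal to $\int_U|\mathbf{u}_0|^2\,\de{\mathbf{x}}$ at $t=0$. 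I would do this following the ``wild data'' construction of \cite{DeleSzekArch} (cf.\ \cite{Chiodaroli}, \cite{FeirConvexInt}): take for $\mathbf{u}_0$ a simple piecewise-constant divergence-free shear-type field of constant modulus $c$, supported in $B$ and glued to $0$ by a divergence-free cutoff (so~\eqref{eq:velocitybc} holds), let $M$ open a strictly positive gap for $t>0$ as $\mathbf{v}$ relaxes, and take $e(t,\mathbf{x})=c^2\chi_B(\mathbf{x})\psi(t)$ with $\psi$ continuous, $\psi(0)=1$ and non-increasing, so that $\int_U e(t,\cdot)\,\de{\mathbf{x}}=c^2|B|\psi(t)$. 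The known forcing $-\nabla_\mathbf{x}p$ and the term $\Omega\times\mathbf{v}$ are folded into the subsolution equation as the corresponding terms in the proof of Theorem~\ref{thm:main1} (and, if the merely $L^\infty$ Coriolis coefficient is awkward, stripped off by an iteration argument in the spirit of \cite{FeirConvexInt}); being lower order and absent from the identity $|\mathbf{u}|^2=e$, they do not affect the monotonicity of $E_{Prim}$. Coupling the resulting velocities with the frozen $\theta$, $p$ gives infinitely many weak dissipative solutions emanating from $\mathbf{u}_0$.

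The heart of the matter---and the main obstacle---is the subsolution construction, where one must simultaneously (i) match the initial trace exactly (zero gap at $t=0$), so that $\mathbf{u}(t,\cdot)\to\mathbf{u}_0$ \emph{strongly} in $L^2(U)$ and $E_{Prim}$ is right-continuous at $0$, which is precisely what removes the upward energy jump of the solutions of Theorem~\ref{thm:main1}; (ii) keep a strictly positive gap for every $t>0$, so that the oscillatory lemma applies; and (iii) keep $t\mapsto\int_U e(t,\cdot)\,\de{\mathbf{x}}$ non-increasing. Balancing (i)--(iii) is the classical De Lellis--Sz\'ekelyhidi wild-data argument; the extra ingredients here---the $L^\infty$ Coriolis coefficient, the forcing inherited from the heat equation and the fixed pressure, and the no-flow boundary---are all lower order and localizable and only require routine checking that they do not spoil the energy monotonicity.
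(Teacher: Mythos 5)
Your high-level strategy coincides with the paper's: reduce to the abstract Euler-type system, and produce an initial datum which is the time-zero trace of a subsolution with \emph{no gap at $t=0$} and a strictly positive gap for $t>0$, so that the convex-integration solutions attain $\mathbf{u}_0$ strongly and the prescribed energy is non-increasing from time $0$ on. However, there is a genuine gap exactly at the step you flag as ``the heart of the matter'' and then dispose of by assertion. First, your explicit candidate datum --- a piecewise-constant shear of modulus $c$, cut off to have compact support in a box $B$, with $e(t,\mathbf{x})=c^2\chi_B(\mathbf{x})\psi(t)$ --- is incompatible with the machinery you invoke. In the framework of Sections \ref{sec:reformulation}--\ref{sec:osc_lem_proof}, the solutions produced satisfy $\frac{1}{2}|\mathbf{u}|^2=\overbar{e}[\mathbf{u}]=\frac{3}{2}(Z(t)-\Pi[\mathbf{u}])$ almost everywhere in all of $U$, with $Z>\overbar{\Pi}\geq\Pi$; the energy density cannot vanish on an open subset of $U$, and condition \ref{enum:subsol_ineq} requires the gap $\overbar{e}[\mathbf{u}]-e(\mathbf{u},\BV)$ to be uniformly positive on all of $U$ for $t>\tau$, not only on $B$. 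So a datum vanishing outside $B$, or one whose modulus in the gluing region differs from the value forced by $\overbar{e}[\mathbf{u}_0]$, cannot satisfy the zero-gap condition at $t=0$ in this setting. Second, and more fundamentally, the existence of a subsolution that starts exactly on the constraint at $t=0$ from a \emph{prescribed explicit} datum and ``opens a strictly positive gap as $\mathbf{v}$ relaxes'' is precisely the hard wild-initial-data problem (of vortex-sheet type); it is not supplied by Lemma \ref{lem:oscillatory}, and you give no construction of the pair $(\mathbf{v},M)$ realizing it.

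The paper circumvents this by \emph{not} prescribing $\mathbf{u}_0$ explicitly. It invokes Lemma \ref{thm:strong continuity} (Feireisl's Theorem 6.1 in the cited reference), itself proved by a further Baire-category argument: for a dense set of times $\tau$ there is a subsolution on $(0,T)$ with zero initial datum whose defect closes exactly at $t=\tau$, i.e.\ $\frac{1}{2}|\mathbf{u}(\tau,\cdot)|^2=\overbar{e}[\mathbf{u}](\tau,\cdot)$ a.e., while the strict gap persists off $\tau$. One then \emph{defines} $\mathbf{u}_0:=\mathbf{u}(\tau,\cdot)$ (a non-explicit, highly oscillatory field), checks that the time-shifted subsolution makes the new set $X_0$ non-empty, takes $Z$ constant, and reruns the proof of Theorem \ref{thm:abstr_exist} to get \eqref{eq:energy constraint} up to $t=0$, whence $E_{abs}$ is constant, hence non-increasing. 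To repair your argument you would either have to import such an abstract touching-time lemma (in which case your explicit description of $\mathbf{u}_0$ must be abandoned), or actually carry out a self-similar or otherwise explicit subsolution construction for your shear datum in the presence of the forcing, the Coriolis term and the no-flow boundary --- a substantial piece of work that is not ``routine checking.''
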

\begin{rem}
    To obtain the largest possible space for initial temperatures in which the given method holds, we can apply the theory of maximal regularity for parabolic equations (see e.\,g.\,\cite{amann}). Particularly, $\theta_0$  can be taken arbitrarily from the interpolation space $[L^p,W^{2,p}]_{\alpha}$ for a suitable $p\in (1,\infty)$ and $\alpha\in(0,1)$.
\end{rem}

\section{Abstract Euler-type system}
\label{sec:reformulation}
To use the techniques from 
\cite{DeleSzekAnnals}, \cite{DeleSzekArch}, we will follow \cite{SavageHutter}, \cite{FeirConvexInt} 
and reformulate the Boussinesq and the extended primitive equations 
as an Euler-type equation.
Let us denote by $\mathbf{u}\odot \mathbf{u} = \mathbf{u}\otimes \mathbf{u} - \frac{1}{3}|\mathbf{u}|^2\BI $ 
the traceless part of the symmetric matrix $\mathbf{u}\otimes \mathbf{u}$. 
We will introduce operators $\BH\colon L^{\infty}(Q;\BR^3) \to L^{1}(Q;\BR^{3\times 3}_{0,sym})$,
 $\Pi\colon L^{\infty}(Q;\BR^3) \to L^{1}(Q)$ and consider the abstract Euler-type system
\begin{subequations}
    \label{eq:abs}
     \begin{equation}
        \label{eq:abs_mom}
        \partial_t \mathbf{u} + \diver_{\mathbf{x}}\left(\mathbf{u}\odot \mathbf{u}+\BH(\mathbf{u})\right) + \nabla_{\mathbf{x}}\left( \Pi[\mathbf{u}]+\frac{1}{3}|\mathbf{u}|^2\right)=0
        \mbox{ in } Q,
     \end{equation}
     \begin{equation}
        \label{eq:abs_incom}
        \diver_{\mathbf{x}} (\mathbf{u})=0 \mbox{ in } Q,
     \end{equation}
     \begin{equation}
        \label{eq:abs_bound_cond}
        \mathbf{u}\cdot \eta=0 \mbox{ on } (0,T)\times \partial U,     
    \end{equation}
     \begin{equation}
        \label{eq:abs_init_cond}
        \mathbf{u}(0)=\mathbf{u}_0 \mbox{ in } U,    
    \end{equation}
\end{subequations}

For the sake of completeness, we add a definition of weak solutions of \eqref{eq:abs}:
\begin{dfn}
    We say that $\mathbf{u}\colon Q\to\BR^{3}$ is a weak solution of the abstract Euler system \eqref{eq:abs} if
    \begin{itemize}
        \item $\mathbf{u}\in \CC([0,T];L_w^2(U))$,
        \item $\mathbf{u}$ satisfies \eqref{eq:abs_mom} in $\CD'(Q)$,
         \item $\mathbf{u}\chi_U$ satisfies \eqref{eq:abs_incom} in $\CD'((0,T)\times\BR^3)$,
        \item $\mathbf{u}(0)=\mathbf{u}_0$.
    \end{itemize}
\end{dfn}

\begin{thm}
\label{thm:abstr_exist}
  Let $T>0$, 
  $\mathbf{u}_0 \in L^{\infty}(U;\BR^3)\cap \CC(U;\BR^3)$ 
    with $\diver_\mathbf{x} \mathbf({u}_0\chi_{U}) =0$ in the sense of distributions.
    Assume that the $\BH$ and $\Pi$ have the following properties:
    \begin{itemize}
      \item $\BH$ is continuous from  $\CC([0,T];L^q_{w}(U))$ to $\CC(\overbar{Q};\BR^{3\times 3}_{0,sym})$
      and mapping bounded sets to bounded sets (with respect to the mentioned topologies).
      \item $\Pi$ is continuous from  $\CC([0,T];L^q_{w}(U))$ to $\CC(\overbar{Q})$
      and there exists $\overbar{\Pi}\in \BR$ such that 
      \begin{equation}
        \label{eq:bound_pressure}
        \Pi[\mathbf{u}]<\overbar{\Pi} \quad \mbox{for every }\mathbf{u}\in L^{\infty}(Q;\BR^3).
      \end{equation}
      \item  For $\mathbf{u}$, $\mathbf{w}\in L^{\infty}(Q;\BR^3)$  
      with $\supp \mathbf{w} \subseteq (\tau,T)\times \overbar{U}$
      \begin{equation}
        \label{eq:nonanticip}
          \Pi[\mathbf{u+w}]=\Pi[\mathbf{u}], \ \BH[\mathbf{u+w}]=\BH[\mathbf{u}], \
          \mbox{almost everywhere in $(0,\tau)\times U$}.
      \end{equation}
    \end{itemize}
    Then there exist infinitely many weak solutions to \eqref{eq:abs} which moreover satisfy 
\begin{equation} \label{eq:energy constraint}
 \frac{3}{2} \Pi[\mathbf{u}](t, \mathbf{x})+\frac{1}{2}|\mathbf{u}(t, \mathbf{x})|^2= \frac{3}{2} Z(t) \quad \text{for every}\; t\in (0,T), \; \text{almost everywhere in }U,
\end{equation}
  for any function $Z(t)$ continuous on $[0,T]$ satisfying $\sup_{t\in [0,T]} Z(t)>\overbar{\Pi}$.
\end{thm}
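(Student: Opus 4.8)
The plan is to realize this as an instance of the standard convex-integration machinery of De Lellis and Sz\'ekelyhidi, suitably adapted (following \cite{FeirConvexInt}, \cite{SavageHutter}, \cite{ChioFeirKrem}) to the fact that here $\BH$ and $\Pi$ are \emph{given operators} evaluated along the solution rather than prescribed functions. First I would introduce the standard relaxation: fix the continuous profile $Z(t)$ with $e := \tfrac{3}{2}(Z(t) - \Pi[\mathbf{u}])$ and reformulate \eqref{eq:abs} together with the pointwise constraint \eqref{eq:energy constraint} as a problem of finding a divergence-free field $\mathbf{u}$ and a traceless symmetric matrix field $\mathbb{S}$ satisfying
\begin{align*}
  \partial_t \mathbf{u} + \diver_{\mathbf{x}}\big(\mathbb{S} + \BH(\mathbf{u})\big) + \nabla_{\mathbf{x}} q &= 0, \qquad \diver_{\mathbf{x}} \mathbf{u} = 0,
\end{align*}
(for a suitable scalar $q$) together with the relaxed inequality $\mathbf{u}\otimes\mathbf{u} - \mathbb{S} \le \tfrac{2}{3} e\, \BI$ in the sense of quadratic forms, and the desired identity is recovered precisely on the set where equality $\lambda_{\max}(\mathbf{u}\otimes\mathbf{u}-\mathbb{S}) = \tfrac{2}{3} e$ holds pointwise. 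Here I would absorb the nonlocal terms $\BH(\mathbf{u})$ and $\Pi[\mathbf{u}]$ into the "known" part of the equations, using continuity from $\CC([0,T];L^q_w(U))$ to $\CC(\overbar Q)$ and the bound \eqref{eq:bound_pressure} to guarantee that $e$ is a fixed \emph{strictly positive} continuous function on the relevant set once $\sup Z > \overbar\Pi$; the non-anticipativity \eqref{eq:nonanticip} is what will make the iteration well-defined step by step in time.

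Next I would set up the functional-analytic framework: define $X_0$ as the closure, in the $\CC([0,T];L^2_w(U))$ topology, of the set of "subsolutions" — smooth (or at least sufficiently regular) divergence-free fields $\mathbf{u}$ with $\mathbf{u}\cdot\eta = 0$ on the boundary, attaining the initial datum $\mathbf{u}_0$, admitting a companion $(\mathbb{S}, q)$ solving the linear system above, and satisfying the \emph{strict} inequality $\lambda_{\max}(\mathbf{u}\otimes\mathbf{u}-\mathbb{S}) < \tfrac{2}{3} e$ on a space-time region exhausting $Q$. On this complete metric space $X_0$ the functionals $I_t[\mathbf{u}] = \int_U \big(|\mathbf{u}(t,\mathbf{x})|^2 - \tfrac{2}{3} e(t,\mathbf{x})\operatorname{tr}\,\text{(something)}\big)$ — more precisely the "energy gap" $\int_U e(t) - \tfrac{3}{2}|\mathbf{u}(t)|^2$, which is lower semicontinuous — are shown to be Baire-generically zero via a perturbation (oscillation) lemma. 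The construction of an initial subsolution requires care because of the boundary condition $\mathbf{u}\cdot\eta = 0$ on a general $\CC^2$ domain and the fact that $\mathbf{u}_0$ is only continuous and bounded: I would take a divergence-free extension that is genuinely zero near $\partial U$ and on a neighborhood of $t=0$ (say for $t < \tau$), then solve for $\mathbb{S}$ via an appropriate elliptic/parabolic-type problem (or explicitly, using a potential) so that the strict subsolution inequality holds — this is exactly where the strict gap $\sup Z > \overbar\Pi$ and the fact that $e$ can be made large relative to $|\mathbf{u}_0|^2$ on $(0,\tau)$ is used, and it explains the announced energy jump $\liminf_{t\to0^+}E(t) > E(0)$.

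The heart of the argument is the oscillation (perturbation) lemma: given a strict subsolution $\mathbf{u}$ with gap bounded below by $\beta > 0$ on a compact region, one adds a highly oscillatory localized perturbation — built from plane-wave "potentials" supported in small space-time balls, using the Tartar framework of the wave cone $\Lambda$ for the linear system and the fact that the constraint set (the boundary of the convex hull $\{(\mathbf{u},\mathbb{S}) : \lambda_{\max}(\mathbf{u}\otimes\mathbf{u}-\mathbb{S}) \le \tfrac{2}{3}e\}$) has $\Lambda$-directions spanning — so as to increase $\int_Q |\mathbf{u}|^2$ by a quantitative amount proportional to (the square of) the gap, while keeping the subsolution property and the constraints intact. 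I would state this as a separate lemma and defer its proof (the excerpt promises Section~\ref{sec:osc_lem_proof}), noting that the only genuinely new wrinkle compared to \cite{DeleSzekAnnals},\cite{DeleSzekArch} is checking that the perturbation can be chosen with support avoiding the region where $\BH,\Pi$ "see" it, so that after adding it $\BH(\mathbf{u}+\mathbf{w}) = \BH(\mathbf{u})$ and $\Pi[\mathbf{u}+\mathbf{w}] = \Pi[\mathbf{u}]$ on the already-frozen part — here \eqref{eq:nonanticip} is essential, and this causality is the main obstacle to a naive application of the existing theory. Given the lemma, a Baire category argument on $X_0$ yields that for a residual set of $\mathbf{u}$ the energy gap vanishes for a.e.\ $t$, hence (by weak continuity in time plus the lsc of the gap) for every $t\in(0,T)$; any such $\mathbf{u}$ is a weak solution of \eqref{eq:abs} satisfying \eqref{eq:energy constraint}, and since the residual set is uncountable (indeed one can produce a continuum of them by localizing the first perturbation in disjoint regions), there are infinitely many. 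Finally I would record that different admissible choices of $Z$ themselves already give infinitely many distinct solutions, completing the proof.
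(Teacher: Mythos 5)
Your proposal follows essentially the same route as the paper: the pressureless relaxation with subsolutions measured via $\lambda_{\max}$, the lower-semicontinuous energy-gap functionals $I_\tau$, an oscillatory lemma obtained by freezing $\BH$ and $\Pi$ (reducing to the case of prescribed $f$ and $\BG$) and invoking the non-anticipativity \eqref{eq:nonanticip} plus continuity of the operators to preserve the strict subsolution inequality, and a Baire category argument yielding a residual, hence infinite, set of solutions. The only notable difference is that the paper settles non-emptiness of the subsolution set with the trivial pair $\mathbf{u}(t)=\mathbf{u}_0$, $\BV=\mathbf{0}$, rather than the more elaborate boundary-adapted extension you sketch.
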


In the first part of this Section, we will show that Theorem \ref{thm:bous_main1} and \ref{thm:main1} 
follow directly from Theorem \ref{thm:abstr_exist} after suitable
choices of $\BH$ and $\Pi$. 
In the second part, we will present the notion of subsolution
for \eqref{eq:abs} and important properties of the set of subsolutions. 

\subsection{Reformulation of the Boussinesq equations}
Assume that $\theta_0$ and $\Omega$ comply with the assumptions of 
Theorem \ref{thm:bous_main1}. Using the classical theory of parabolic equations (see e.\,g.\,Lemma \ref{lem:parab_regul}) 
we can define an  operator $\Theta=\Theta[\mathbf{u}]$  from $L^{\infty}(Q;\BR^3)$ to $\CC([0,T]\times \overbar{U})$ such 
that $\theta=\Theta[\mathbf{u}]$ solves \eqref{eq:bous_temp} 
in the sense of Definition \ref{dfn:weaksol_bous}.  The operator
$\mathbf{u}\mapsto \Theta[\mathbf{u}]$ is continuous from $\CC([0,T];L^q_{w}(U))$
to $\CC([0,T]\times \overbar{U})$ (for $q$ large enough).

Using Corollary \ref{cor:tosym}, we obtain the existence of a 
linear operator $\BH_{Bous}=\BH_{Bous}[\mathbf{u}]$
from $L^{\infty}(Q;\BR^3)$ to $\CC(\overbar{Q};\BR^{3\times 3}_{0,sym})$ such that
\[
    \diver_{\mathbf{x}}(\BH_{Bous}[\mathbf{u}])=\Omega \times \mathbf{u}+\begin{pmatrix} 0 \\ 0 \\ \Theta[\mathbf{u}] \end{pmatrix}-\nabla\left(\frac{2}{3}z\Theta[\mathbf{u}]\right).
\]
Define $\Pi_{Bous}[\mathbf{u}] = \frac{2}{3}z\Theta[\mathbf{u}]$.
The operator $\Pi_{Bous}$ is continuous from $\CC([0,T];L^q_{w}(U))$ to $\CC(\overbar{Q})$ and $\BH_{Bous}$ is continuous from $\CC([0,T];L^q_w(U;\BR^3))$ to $\CC(\overbar{Q})$ for any $q >3$. 
Both operators are mapping bounded sets from  $L^{\infty}(Q)$ on bounded sets in $\CC(\overbar{Q})$.
Using the maximum principle for \eqref{eq:temp}, see Lemma \ref{lem:parab_regul}, we obtain
\begin{equation*}
    |\Pi_{Bous}[u](t,\mathbf{x})|\leq \frac{2}{3} \|z\cdot\theta_0\|_{L^{\infty}(U)}<\infty \mbox{ for every $(t,\mathbf{x})\in Q$.}
\end{equation*}
The condition \eqref{eq:nonanticip} holds from the defition of the operators. Indeed,
$\Theta \colon L^{\infty}(Q;\BR^3)$ is a unique solution of an evolutionary equation
and $\mathbf{v}=0$ is the only solution of \eqref{eq:lame} with $\mathbf{g}=0$ with zero boundary conditions.  
If $\mathbf{u}$ is a weak solution of \eqref{eq:abs} then 
the triplet $(\mathbf{u},p,\theta)=(\mathbf{u}, 0 , \Theta[\mathbf{u}] )$
is a weak solution of \eqref{eq:boussinesq}, hence Theorem \ref{thm:bous_main1} 
is a corollary of Theorem \ref{thm:abstr_exist}.

\subsection{Reformulation of the extended primitive equations}
Assume that $\theta_0$, $\mathbf{u}_0$, $P$ and $\Omega$ comply with the assumptions of Theorem \ref{thm:main1}. 
For a given $\mathbf{u}$ we can extend $\Theta[\mathbf{u}]$ continuously  with respect to space on $[0,T]\times \BR^3$. 
As $U$ is bounded, one can define an extension $\Theta[\mathbf{u}]$ such that it has compact support. 
Let us take arbitrary function $P \in \CC([0,T]\times \BR^2)$.
The function $p\colon Q \to \BR$ defined by
\[
    p(t,x,y,z)=P(t,x,y)-\int_{-\infty}^z \Theta[\mathbf{u}](t,x,y,s)\de{s}
\]
satisfies \eqref{eq:mom3} and the operator $\Gamma_{Prim}=\Gamma_{Prim}[\mathbf{u}]\colon \mathbf{u} \mapsto p$ 
maps functions from $L^{\infty}(Q;\BR^3)$ to $\CC(\overbar{Q})$. 
The operator $\Gamma_{Prim}$ is continuous from $\CC([0,T];L^q_{w}(U)))$ to $\CC(\overbar{Q})$.
Using Corollary \ref{cor:tosym}, we obtain existence of a linear operator $\BH_{Prim}=\BH_{Prim}[\mathbf{u}]$
from $L^{\infty}(Q;\BR^3)$ to $\CC(\overbar{Q};\BR^{3\times 3}_{0,sym})$ such that
\[
    \diver_{\mathbf{x}}(\BH_{Prim}[\mathbf{u}])=\Omega \times \mathbf{u} + \nabla \Gamma_{Prim}[\mathbf{u}].
\]
 Similarly to the Boussinesq equations, $\BH_{Prim}$ is continuous from $\CC([0,T];L^q_w(U;\BR^3))$ to $\CC(\overbar{Q})$ for any $q >3$. 
Both operators are mapping bounded sets from  $L^{\infty}(Q)$ on bounded sets in $\CC(\overbar{Q})$.
For any weak solution $\mathbf{u}$ of \eqref{eq:abs}
the triplet $(\mathbf{u},p,\theta)=(\mathbf{u}, \Gamma_{Prim} , \Theta[\mathbf{u}] )$
is a weak solution of the extended primitive equations. Therefore also Theorem \ref{thm:main1} 
is a corollary of Theorem \ref{thm:abstr_exist}.

\subsection{Subsolutions for the abstract Euler equation}

Let $\mathbf{u}_0$ comply with the assumptions of Theorem \ref{thm:abstr_exist}.
Observe that the abstract system is invariant with respect to adding a continuous
function $Z=Z(t)$ to the pressure term $\Pi[\mathbf{u}]$. Moreover, 
$\tilde{\Pi}[\mathbf{u}] = {\Pi}[\mathbf{u}]+ Z$ satisfies the same qualitative properties
as $\Pi$ in Theorem \ref{thm:abstr_exist}.

Let us fix $Z=Z(t)$ continuous on $[0,T]$ such that
  
  \[
    \Pi[\mathbf{v}(t,\mathbf{x})] <Z(t)    \quad \mbox{for }
    (t,\mathbf{x})\in Q
  \]
for any $\mathbf{v}\in L^{\infty}(Q;\BR^3)$. Such function $Z$ exists due to the boundedness of $\Pi$. 
We  restrict our attention only on the so-called pressureless case, i.\,e. when solutions are satisfying 
\[
    \Pi[\mathbf{u}]+\frac{1}{3}|\mathbf{u}|^2-Z(t)=0 \quad \mbox{in } Q.
\]

Mimicking the strategy of De Lellis and Sz\'{e}kelyhidi we recast the abstract system into 
a linear system supplemented by implicit constitutive (possibly non-algebraic) relations:
\begin{subequations}
    \label{eq:nonlin_separ}
    \begin{equation}
        \label{eq:linear}
        \partial_t \mathbf{u} + \diver_{\mathbf{x}}\BV=0,    
    \end{equation}
    \begin{equation}
        \label{eq:incompress}
        \diver_{\mathbf{x}} \mathbf{u}=0,
    \end{equation}
    \begin{equation}
        \BV = \mathbf{u}\odot \mathbf{u}+\BH(\mathbf{u}),  
    \end{equation}  
    \begin{equation}
        \frac{1}{2}|\mathbf{u}|^2=\frac{3}{2}\left(Z(t)-\Pi[\mathbf{u}] \right).
    \end{equation}
\end{subequations}

To introduce a suitable notion of subsolution we put
\[
    \overbar{e}[\mathbf{u}] = \frac{3}{2}\left( Z(t)-\Pi[\mathbf{u}] \right) 
\]
and
\[
    e(\mathbf{u}, \BV) = \frac{3}{2}\lambda_{\max}[\mathbf{u}\otimes \mathbf{u} + \BH[\mathbf{u}]-\BV],
\]
where $\lambda_{\max}(\BU)$ denotes the maximal eigenvalue of $\BU\in \BR^{3\times 3}_{sym}$. 
One has for any $\mathbf{v}\in \BR^3$ and $\BU \in \BR^{3\times 3}_{0,sym}$ the following inequality
\begin{equation}
    \label{eq:bound_v}
    \frac{1}{2}|\mathbf{v}|^2\leq \frac{3}{2}\lambda_{\max}\left( \mathbf{v}\otimes \mathbf{v} +\BU    \right)
\end{equation}
and the equality holds if and only if
\begin{equation}
    \label{eq:sym_matr_eq}
    \BU = \mathbf{v}\otimes \mathbf{v} - \frac{1}{3}|\mathbf{v}|^2\BI.
\end{equation}
It is possible to estimate $\BU$ by the means of $|e(\mathbf{v},\BU)|$, particularly
\begin{equation}
    \label{eq:bound_U}
    |\BU|_{\ell^\infty}\leq 2|\lambda_{\min}(\BU)|\leq \frac{4}{3}e(\mathbf{v},\BU).
\end{equation}
    
Analogously to \cite{FeirConvexInt}: 
\begin{dfn}
We call a pair  $(\mathbf{u}, \BV)$ a \emph{subsolution of the abstract Euler system} (or briefly a \emph{subsolution}) if
\begin{enumerate}
    \item $\mathbf{u}\in  C([0,T];L_w^2(U;\BR^3)) \cap \CC(Q;\BR^3)$ and
    $\BV\in L^{\infty}\cap \CC(Q;\BR^{3\times 3}_{0,sym})$,
    \item  the pair $(\mathbf{u}, \BV)$ satisfies \eqref{eq:linear} in the sense of distributions on $Q$ and $\mathbf{u}\chi_U$ solves \eqref{eq:incompress} in the sense of distributions on $(0,T)\times \BR^3$,
    \item $\mathbf{u}(0)=\mathbf{u}_0$,
    \item \label{enum:subsol_ineq} for every $0<\tau<T$
    $\essinf_{t\in (\tau,T),\,\mathbf{x}\in U}\left( \overbar{e}[\mathbf{u}](t,\mathbf{x})-e(\mathbf{u}(t,\mathbf{x}),\BV(t,\mathbf{x}))    \right)>0.$
\end{enumerate}
\end{dfn}
We denote $X_0$ the set of all $\mathbf{u}$ for which exist $\BV$ such that $(\mathbf{u},\BV)$ is a subsolution of the abstract Euler type system. 
Let us remark that there exists a constant $E>0$ such that $\overbar{e}[\mathbf{u}]\leq E$ for every $\mathbf{u}\in X_0$.
Observe that \eqref{eq:bound_pressure}, \eqref{eq:bound_v} and \eqref{eq:bound_U} imply the boundedness of $X_0$ in $L^{\infty}(Q;\BR^3)$.

We consider for each $\tau\in(0,T\slash 2)$ a negative functional $I_\tau$ on $X_0$ defined by
\[
    I_{\tau}(\mathbf{u}) = \inf_{t\in (\tau, T-\tau)}\int_{U}\frac{1}{2}|\mathbf{u}(t,\mathbf{x})|^2-\overbar{e}[\mathbf{u}(t,\mathbf{x})]\de{\mathbf{x}}.
\]

\begin{lem}
    \label{lem:subsolconv}
    Let $\{(\mathbf{u}_n, \BV_n)\}_{n\in \BN}$ be subsolutions.
    Then there exists a pair $(\mathbf{u},\BV)$ such that for a suitable subsequence (not relabeled)
    \begin{align}
     \label{eq:convergence}
        &\mathbf{u}_n \to \mathbf{u} \mbox{ strongly in } \CC([0,T];L_{w}^2(U; \BR^3)) \mbox{ and weakly-$*$} \in L^{\infty}(Q;\BR^3) \quad
        \\
        &\BV_n \to \BV \mbox{ weakly-$*$ in } L^{\infty}(Q;\BR^{3\times 3}_{0,sym})
    \end{align} 
     holds.  The limit $(\mathbf{u},\BV)$ is satisfying all conditions on subsolutions 
   except condition \ref{enum:subsol_ineq}, where only (nonstrict) inequality holds. 
   Moreover, if 
  \begin{equation}
    \label{eq:maximal_energy}
    I_{\tau}(\mathbf{u})=0 \ \mbox{for each } \tau>0
  \end{equation} 
  then $\mathbf{u}$ is a weak solution of the abstract Euler system \eqref{eq:abs} satisfying \eqref{eq:energy constraint} for suitable functions $Z$.
  
\end{lem}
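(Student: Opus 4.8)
\emph{Strategy.} I would organise the argument into three steps: a compactness step producing the candidate limit pair, a step passing the (essentially linear) subsolution structure and the inequality to the limit, and a final step extracting a genuine weak solution once the energy hypothesis \eqref{eq:maximal_energy} is imposed.

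\emph{Step 1 (compactness).} Since there is a constant $E$ with $\overbar{e}[\mathbf{u}]\le E$ on $X_0$ and $X_0$ is bounded in $L^{\infty}(Q;\BR^3)$, the sequence $\{\mathbf{u}_n\}$ is bounded in $L^{\infty}(Q;\BR^3)$; moreover, letting $\tau\to 0$ in property \ref{enum:subsol_ineq}, one has $e(\mathbf{u}_n,\BV_n)<\overbar{e}[\mathbf{u}_n]\le E$ a.e.\ in $Q$, so \eqref{eq:bound_U} applied with $\BU=\BV_n$ gives $|\BV_n|\le\tfrac43 E$ a.e., i.e.\ $\{\BV_n\}$ is bounded in $L^{\infty}(Q;\BR^{3\times 3}_{0,sym})$. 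Along a subsequence, $\mathbf{u}_n\overset{*}{\rightharpoonup}\mathbf{u}$ and $\BV_n\overset{*}{\rightharpoonup}\BV$ in these spaces. To upgrade the convergence of $\mathbf{u}_n$ to the one in $\CC([0,T];L^2_w(U))$, I would fix $\varphi\in\CD(U;\BR^3)$ and note that, by \eqref{eq:linear}, the scalar functions $t\mapsto\int_U\mathbf{u}_n(t,\cdot)\cdot\varphi\,\de{\mathbf{x}}$ are uniformly bounded and equi-Lipschitz, their time derivatives being $-\int_U\BV_n(t,\cdot):\nabla_{\mathbf{x}}\varphi\,\de{\mathbf{x}}$, controlled by $\|\BV_n\|_{L^\infty}\|\nabla\varphi\|_{L^1}$. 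Arzel\`a--Ascoli and a diagonal extraction over a countable subset of $\CD(U;\BR^3)$ dense in $L^2(U;\BR^3)$, together with the uniform $L^2$-bound, then yield \eqref{eq:convergence}.

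\emph{Step 2 (passing the subsolution conditions to the limit).} Equations \eqref{eq:linear} and \eqref{eq:incompress} are linear in $(\mathbf{u}_n,\BV_n)$, hence hold for $(\mathbf{u},\BV)$ in the sense of distributions; $\mathbf{u}(0)=\mathbf{u}_0$ follows by evaluating the $\CC([0,T];L^2_w)$-limit at $t=0$, and the integrability required in condition~1 is preserved. The delicate point is the inequality in condition \ref{enum:subsol_ineq}, for which I would argue by weak lower semicontinuity. Fix $\tau>0$ and $0\le\psi\in\CD((\tau,T)\times U)$. Since the uniform $L^\infty$-bound turns the weak $L^2$-convergence into weak convergence in every $L^q(U)$, continuity of $\Pi$ and $\BH$ on $\CC([0,T];L^q_w(U))$ yields $\overbar{e}[\mathbf{u}_n]\to\overbar{e}[\mathbf{u}]$ and $\BH[\mathbf{u}_n]\to\BH[\mathbf{u}]$ uniformly on $\overbar{Q}$. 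As $\lambda_{\max}$ is $1$-Lipschitz, replacing $\BH[\mathbf{u}_n]$ by $\BH[\mathbf{u}]$ inside $e(\mathbf{u}_n,\BV_n)$ costs a uniformly vanishing error, and the modified integrand $\tfrac32\lambda_{\max}(\mathbf{u}_n\otimes\mathbf{u}_n+\BH[\mathbf{u}]-\BV_n)$ is a convex function of the pair $(\mathbf{u}_n,\BV_n)$ (matrix convexity of $\mathbf{p}\mapsto\mathbf{p}\otimes\mathbf{p}$ composed with the convex, PSD-monotone $\lambda_{\max}$, plus an affine term), so the associated integral functional is sequentially weakly lower semicontinuous. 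Passing to the limit in $\int_0^T\!\!\int_U(\overbar{e}[\mathbf{u}_n]-e(\mathbf{u}_n,\BV_n))\psi\,\de{\mathbf{x}}\de{t}\ge 0$ gives the same inequality for $(\mathbf{u},\BV)$, whence $\overbar{e}[\mathbf{u}]-e(\mathbf{u},\BV)\ge 0$ a.e.\ on $(\tau,T)\times U$, and letting $\tau\to 0$, a.e.\ on $Q$. I expect this lower-semicontinuity step to be the main obstacle, precisely because the quadratic term $\mathbf{u}_n\otimes\mathbf{u}_n$ converges only weakly; the nonlocal dependence of $\Pi$ and $\BH$ on $\mathbf{u}$ is, by contrast, harmless, those operators converging \emph{strongly}.

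\emph{Step 3 (conclusion under \eqref{eq:maximal_energy}).} Applying \eqref{eq:bound_v} with $\BU=\BH[\mathbf{u}]-\BV$ (a traceless symmetric matrix) and using Step~2,
\[
   \tfrac12|\mathbf{u}|^2\ \le\ e(\mathbf{u},\BV)\ \le\ \overbar{e}[\mathbf{u}]\qquad\text{a.e.\ in }Q .
\]
Integrating in $\mathbf{x}$ gives $\int_U\big(\tfrac12|\mathbf{u}(t,\cdot)|^2-\overbar{e}[\mathbf{u}(t,\cdot)]\big)\de{\mathbf{x}}\le 0$ for a.e.\ $t$, while \eqref{eq:maximal_energy} forces this integral to be $\ge 0$ for a.e.\ $t\in(\tau,T-\tau)$ and all $\tau$, hence for a.e.\ $t\in(0,T)$; thus it vanishes for a.e.\ $t$, and since its integrand is pointwise non-positive (and $L^1$ on $Q$), $\tfrac12|\mathbf{u}|^2=\overbar{e}[\mathbf{u}]$ a.e.\ in $Q$ and both inequalities above are in fact equalities. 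The characterisation of the equality case in \eqref{eq:bound_v}, see \eqref{eq:sym_matr_eq}, then forces $\BV=\mathbf{u}\odot\mathbf{u}+\BH[\mathbf{u}]$ a.e.\ in $Q$; inserting this into \eqref{eq:linear} yields $\partial_t\mathbf{u}+\diver_{\mathbf{x}}(\mathbf{u}\odot\mathbf{u}+\BH[\mathbf{u}])=0$ in $\CD'(Q)$. Finally, $\tfrac12|\mathbf{u}|^2=\overbar{e}[\mathbf{u}]=\tfrac32(Z(t)-\Pi[\mathbf{u}])$ rewrites as $\Pi[\mathbf{u}]+\tfrac13|\mathbf{u}|^2=Z(t)$, which is independent of $\mathbf{x}$, so adding its (vanishing) spatial gradient to the momentum balance reproduces \eqref{eq:abs_mom}, while the same identity is exactly \eqref{eq:energy constraint}. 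Together with \eqref{eq:incompress} and $\mathbf{u}(0)=\mathbf{u}_0$, this shows that $\mathbf{u}$ is a weak solution of \eqref{eq:abs} with the asserted energy equality, which completes the plan.
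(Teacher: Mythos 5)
Your proposal is correct and follows essentially the same route as the paper: uniform $L^\infty$ bounds from the definition of subsolutions, Arzel\`a--Ascoli/linear-conservation-law compactness to get convergence in $\CC([0,T];L^2_w(U))$, passage to the limit in the linear constraints, and, under \eqref{eq:maximal_energy}, collapse of the chain $\tfrac12|\mathbf{u}|^2\le e(\mathbf{u},\BV)\le\overbar{e}[\mathbf{u}]$ to equalities so that \eqref{eq:sym_matr_eq} identifies $\BV$ and yields a weak solution. You in fact supply two details the paper delegates or leaves implicit --- the equi-Lipschitz/diagonal argument behind Lemma \ref{lem:subsol_limit} and the convexity-based weak lower semicontinuity of $e$ needed to pass the nonstrict inequality to the limit --- and both are handled correctly.
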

\begin{proof}
    As $X_0$ consists of functions bounded in $L^{\infty}(Q;\BR^3)$, $\mathbf{u}_n$ resp. $\BV_n$ are also uniformly bounded in $L^{\infty}(Q;\BR^3)$ resp. $L^{\infty}(Q;\BR^{3\times 3}_{0,sym})$. 
    The standard time regularity for weak solutions together with the Arzel\`a-Ascoli theorem implies the existence of $\mathbf{u}$ and $\BV$ such that \eqref{eq:convergence} holds for a suitable subsequence. 
    With respect to Lemma \ref{lem:subsol_limit}, the pair $(\mathbf{u},\BV)$ satisfies all conditions on subsolutions except condition \ref{enum:subsol_ineq}.
    
If, moreover, $I_{\tau}(\mathbf{u})=0$ for every $\tau \in(0,T)$ then 
    $\frac{1}{2}|\mathbf{u}|^2=\overbar{e}[\mathbf{u}]=\frac{3}{2}\left(    Z(t)-\Pi[\mathbf{u}] \right)$ everywhere in $(0,T)$ and a.e. in $U$ 
    and \eqref{eq:sym_matr_eq} hold almost everywhere in $(0,T)\times \Omega$. Hence, 
thanks to the hypothesis of the lemma 
    \[  
        \BV = \BH[\mathbf{u}]+\mathbf{u}\otimes\mathbf{u}  -\frac{1}{3}|\mathbf{u}|^2\BI \quad \mbox{a.\,e.\,in } (0,T)\times \Omega
    \]
    and $\mathbf{u}$ is a weak solution of \eqref{eq:nonlin_separ}.
\end{proof}

\section{Existence result for the abstract Euler-type system}
\label{sec:existence}
An important step on the way to find $\mathbf{u}\in X$ satisfying \eqref{eq:maximal_energy} is the following possibility to appropriately perturb any subsolution so that $I$ increases. 
The proof of the following lemma is postponed until Section \ref{sec:osc_lem_proof}.
\begin{lem}[Oscillatory lemma]
    \label{lem:oscillatory}
    Let $\mathbf{u}\in X_0$ and $(\mathbf{u},\BV)$ be a subsolution and $\tau>0$. Then there exist sequences 
    $\{\mathbf{w}_n\}_{n\in \BN}\subseteq \CD((\tau,T)\times U;\BR^3)$ and 
    $\{\BW_n\}_{n\in \BN}\subseteq \CD((\tau,T)\times U;\BR^{3\times 3}_{0,sym})$ 
    such that:
    \begin{itemize}
        \item $(\mathbf{u}+\mathbf{w}_n,\BV+\BW_n)$ are subsolutions,
        \item $\mathbf{w}_n \to 0$ in $\CC([0,T];L_{w}^2(U))$,
        \item there exists $c=c(E)>0$ such that 
            \begin{equation}
                \label{eq:energy_increase}
                \liminf_{n\to \infty} I_{\tau}(\mathbf{u}+\mathbf{w}_n)\geq I_{\tau}(\mathbf{u})+c(E)\left(I_{\tau}(\mathbf{u})\right)^2.
            \end{equation}
    \end{itemize}
\end{lem}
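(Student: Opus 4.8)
The plan is to follow the by-now-standard convex integration scheme of De Lellis and Székelyhidi, adapted to the present abstract setting along the lines of \cite{FeirConvexInt} and \cite{ChioFeirKrem}. The perturbation $\mathbf{w}_n$ will be built as a finite superposition of highly oscillatory localized plane waves (Beltrami-type or, equivalently, plane-wave solutions of the constant-coefficient system $\partial_t \mathbf{w} + \diver_{\mathbf{x}} \BW = 0$, $\diver_{\mathbf{x}} \mathbf{w} = 0$), each supported well inside $(\tau,T)\times U$, with wave vectors chosen so that the pairs $(\mathbf{w}_n,\BW_n)$ solve the linear system \eqref{eq:linear}--\eqref{eq:incompress} exactly and have compact support in the open set $(\tau,T)\times U$. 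Because these are genuine oscillations with mean zero, one automatically gets $\mathbf{w}_n \rightharpoonup 0$ and, by the usual argument (oscillations at frequency $\lambda_n \to \infty$ localized in space), $\mathbf{w}_n \to 0$ in $\CC([0,T];L^2_w(U))$; the initial condition is untouched since $\supp \mathbf{w}_n \subseteq (\tau,T)\times U$. The non-anticipativity hypothesis \eqref{eq:nonanticip} is exactly what guarantees that $\overbar{e}[\mathbf{u}+\mathbf{w}_n] = \overbar{e}[\mathbf{u}]$ and $\BH[\mathbf{u}+\mathbf{w}_n] = \BH[\mathbf{u}]$ on $(0,\tau)\times U$, so the subsolution inequality there is preserved; on $(\tau,T)\times U$ one must instead use the continuity of $\BH$ and $\Pi$ together with the fact that $\mathbf{w}_n \to 0$ in the relevant weak topology.

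The core of the argument is the pointwise geometric step. At a fixed $(t,\mathbf{x})$ where strict subsolution inequality holds, the ``gap'' $\rho(t,\mathbf{x}) := \overbar{e}[\mathbf{u}](t,\mathbf{x}) - e(\mathbf{u}(t,\mathbf{x}),\BV(t,\mathbf{x})) > 0$ measures how far $(\mathbf{u},\BV)$ is from being an exact solution of \eqref{eq:sym_matr_eq}. The classical lemma (see \cite{DeleSzekArch}, and its adaptation in \cite{FeirConvexInt}) provides, for a suitable direction, a plane-wave perturbation $(\bar{\mathbf{w}},\bar{\BW})$ of amplitude comparable to $\rho$ whose spatial average of $\tfrac12|\bar{\mathbf{w}}|^2$ is bounded below by a constant times $\rho^2$, while the perturbed pair stays a subsolution (i.e. the new gap remains nonnegative). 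Summing finitely many such waves with disjoint-in-frequency supports (a ``staircase'' localized over a grid in $(\tau,T)\times U$), using a partition of unity and the continuity of $\overbar{e} - e$, one obtains that
\[
    \liminf_{n\to\infty}\int_U \tfrac12 |\mathbf{u}+\mathbf{w}_n|^2(t,\mathbf{x})\,\de\mathbf{x} \ge \int_U \tfrac12|\mathbf{u}|^2(t,\mathbf{x})\,\de\mathbf{x} + c(E)\Big(\int_U \big(\tfrac12|\mathbf{u}|^2 - \overbar{e}[\mathbf{u}]\big)(t,\mathbf{x})\,\de\mathbf{x}\Big)^2
\]
uniformly for $t\in(\tau,T-\tau)$, where the constant $c(E)$ depends only on the uniform bound $E$ on $\overbar{e}[\mathbf{u}]$ over $X_0$ (this uses \eqref{eq:bound_v}, \eqref{eq:bound_U} and Jensen's inequality to pass from the pointwise bound on $\tfrac12|\bar{\mathbf{w}}|^2$ in terms of $\rho^2$ to the squared integral of the gap). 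Taking the infimum over $t\in(\tau,T-\tau)$ and noting that the added energy is nonnegative everywhere yields \eqref{eq:energy_increase}.

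The main obstacle, and the part requiring the most care, is making the localization compatible with all the constraints simultaneously: the perturbations must be exactly divergence-free and exactly solve the linear momentum balance with a traceless symmetric $\BW_n$, must have support strictly inside $(\tau,T)\times U$ (so cutoffs are needed, which spoil exactness and must be corrected, e.g. by a Bogovskii-type operator or by choosing the oscillation frequency large enough that the error is lower order), and must keep the strict subsolution inequality $\overbar{e}[\mathbf{u}+\mathbf{w}_n] - e(\mathbf{u}+\mathbf{w}_n,\BV+\BW_n) > 0$ on every $(\sigma,T)\times U$ with $\sigma > 0$ — not merely nonnegativity. The last point is where the non-anticipativity condition \eqref{eq:nonanticip} is essential: it decouples the behaviour of $\BH,\Pi$ near $t=0$ from the perturbation, so that on $(0,\tau)$ nothing changes, while on $(\tau,T)$ one restores strict inequality by first perturbing on a slightly smaller time interval and then using uniform continuity. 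I would defer the fully detailed verification of these localization estimates to Section \ref{sec:osc_lem_proof}, following \cite{FeirConvexInt}, and here only set up the scheme and isolate the geometric plane-wave lemma as the engine of the energy increment.
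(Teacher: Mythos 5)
Your overall strategy coincides with the paper's: the oscillatory perturbations are not built from scratch but are taken from the standard De Lellis--Sz\'ekelyhidi localized plane-wave lemma for the linear system with \emph{frozen} coefficients (the paper isolates this as Lemma \ref{lem:oscill1}, with fixed functions $f$ and $\BG$ in place of $\overbar{e}[\mathbf{u}]$ and $\BH[\mathbf{u}]$, citing \cite{DonaFeirMarc} and \cite{DeleSzekArch} for its proof), and the $\mathbf{u}$-dependence of $\BH$ and $\Pi$ is then handled by their continuity on $\CC([0,T];L^q_w(U))$ together with the non-anticipativity condition \eqref{eq:nonanticip}, exactly as you indicate.

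The one place where your outline would not close as written is the preservation of the \emph{strict} subsolution inequality. You assert that the frozen-coefficient perturbation keeps the new gap ``nonnegative'' and propose to restore strictness by ``perturbing on a slightly smaller time interval''; neither is the mechanism that works. Condition \ref{enum:subsol_ineq} demands a positive essential infimum of $\overbar{e}[\mathbf{u}+\mathbf{w}_n]-e(\mathbf{u}+\mathbf{w}_n,\BV+\BW_n)$ on every $(\sigma,T)\times U$, and switching from $\BH[\mathbf{u}],\overbar{e}[\mathbf{u}]$ back to $\BH[\mathbf{u}+\mathbf{w}_n],\overbar{e}[\mathbf{u}+\mathbf{w}_n]$ introduces error terms $r_n$, $t_n$ that only tend to zero uniformly (via the $1$-Lipschitz continuity of $\lambda_{\max}$ on symmetric positive semidefinite matrices and the continuity of $\BH$ and $\Pi$); so a quantitative margin is needed to absorb them. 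The paper obtains it from the strict inequality satisfied by the original subsolution: there is an increasing $\delta\colon(0,T)\to(0,\infty)$ with $\overbar{e}[\mathbf{u}]-e(\mathbf{u},\BV)>\delta(s)$ on $(s,T)\times U$, and one applies the frozen lemma with the \emph{reduced} profile $f=\overbar{e}[\mathbf{u}]-\delta$ and $\BG=\BH[\mathbf{u}]$. The perturbed pair then satisfies $e(\mathbf{u}+\mathbf{w}_n,\BV+\BW_n)<\overbar{e}[\mathbf{u}+\mathbf{w}_n]-\delta+r_n+t_n\leq \overbar{e}[\mathbf{u}+\mathbf{w}_n]-\delta/2$ for $n$ large, while $r_n+t_n=0$ on $(0,\tau)\times U$ by \eqref{eq:nonanticip}. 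With this correction (and the observation that $\overbar{e}[\mathbf{u}+\mathbf{w}_n]\to\overbar{e}[\mathbf{u}]$ uniformly, so the $\delta$-reduction does not spoil the quadratic energy gain \eqref{eq:energy_increase}), your argument matches the paper's.
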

\begin{rem}
The constant $c(E)$ does not depend on $\mathbf{u}$ or $\tau$.
\end{rem}

The existence of infinitely many weak solutions is then concluded from a Baire category argument similar to e.\,g.\,\cite{DeleSzekAnnals}.
\begin{lem}
    \label{lem:category}
    Let $(X,d)$ be a complete metric space, $I\colon X \to (-\infty;0]$ a function of Baire class $1$.
    Let $X_0$ be a nonempty dense subset of $X$ with the following property: for any $\beta<0$ there exists $\alpha=\alpha(\beta)>0$ such that for any $x\in X_0$ 
satisfying $I(x)<\beta<0$ there exists $x_n\in X_0$ with 
    \begin{itemize}
        \item $x_n\to x$ in $(X,d)$ and
        \item $\liminf_{n\to \infty} I(x_n)\geq I(x)+\alpha(\beta)$.
    \end{itemize}
    Then there exists a residual set $S\subseteq X$ such that $I(x)=0$ on $S$.
\end{lem}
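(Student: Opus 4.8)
The plan is to run the Baire category argument of De Lellis and Sz\'ekelyhidi. Since $(X,d)$ is a complete metric space, a real function of Baire class $1$ on it has a residual set of points of continuity (the classical theorem of Baire); call this set $S\subseteq X$, a dense $G_\delta$. It then suffices to show $I\equiv 0$ on $S$, which I would do by contradiction.

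Suppose $\overline{x}\in S$ satisfies $I(\overline{x})=-2\eta<0$ for some $\eta>0$. First, using continuity of $I$ at $\overline{x}$, fix $r>0$ such that $-3\eta<I(y)<-\eta$ on the ball $B_r(\overline{x})=\{y\in X:\,d(y,\overline{x})<r\}$. Next, apply the hypothesis with $\beta:=-\eta$ to obtain the associated $\alpha:=\alpha(-\eta)>0$, and fix an integer $M$ with $M\alpha/2>2\eta$. The idea is then to start from a point $y_0\in X_0$ with $d(y_0,\overline{x})<r/2$ (available by density of $X_0$) and to carry out exactly $M$ perturbation steps: given $y_j\in X_0\cap B_r(\overline{x})$, one has $I(y_j)<-\eta=\beta$, so the hypothesis yields a sequence in $X_0$ converging to $y_j$ whose $I$-values have $\liminf$ at least $I(y_j)+\alpha$; choosing a far-enough term of this sequence produces $y_{j+1}\in X_0$ with $d(y_{j+1},y_j)<r/(2M)$ and $I(y_{j+1})\ge I(y_j)+\alpha/2$. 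A telescoping estimate, $d(y_j,\overline{x})\le d(y_0,\overline{x})+\sum_{i=0}^{j-1}d(y_{i+1},y_i)<r/2+r/2=r$, keeps all iterates (including $y_M$) inside $B_r(\overline{x})$, so the construction runs for all $M$ steps. But then $y_M\in B_r(\overline{x})$ forces $I(y_M)<-\eta$, while summing the increments gives $I(y_M)\ge I(y_0)+M\alpha/2>-3\eta+2\eta=-\eta$ -- a contradiction. Hence no continuity point of $I$ has negative value, i.e.\ $I\equiv 0$ on the residual set $S$.

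The one genuinely delicate feature is that the improvement property is available only at points of $X_0$ and that $I$ is merely of Baire class $1$, not continuous; this is exactly why one cannot argue directly at $\overline{x}$ but must pass to the residual set of continuity points of $I$ and then approximate such a point from within $X_0$. Everything else is bookkeeping with constants: the uniformity of $\alpha=\alpha(\beta)$ over $x\in X_0$ (the same $\alpha$ at every step) is what makes a fixed finite number $M$ of steps enough to overshoot the continuity bound $-\eta$, and choosing each perturbation of size $<r/(2M)$ keeps the cumulative displacement below $r$. I do not expect any obstacle beyond fixing these constants consistently.
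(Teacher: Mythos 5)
Your proof is correct. The skeleton coincides with the paper's: both invoke Baire's theorem that a class-$1$ function on a complete metric space has a residual set of continuity points, and both derive a contradiction at a continuity point where $I<0$ by combining the density of $X_0$ with the uniform improvement $\alpha(\beta)$. Where you diverge is in how the contradiction is extracted. The paper perturbs only \emph{once}: it approximates the continuity point $x$ by $x_n\in X_0$ with $I(x_n)\to I(x)$, applies the improvement property to each $x_n$, and diagonalizes to obtain a sequence converging to $x$ whose $I$-values have $\liminf$ at least $I(x)+\alpha(\beta)>I(x)$, contradicting continuity at $x$ directly. You instead trap a ball $B_r(\overline{x})$ on which continuity forces $-3\eta<I<-\eta$, and then iterate the perturbation a fixed finite number $M$ of times (with $M\alpha/2>2\eta$ and step size $r/(2M)$ so the telescoped displacement stays below $r$) until the accumulated gain $M\alpha/2$ overshoots the ceiling $-\eta$. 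Both arguments rest on the same essential point --- that $\alpha(\beta)$ is uniform over all $x\in X_0$ with $I(x)<\beta$ --- and your bookkeeping with $r/2$, $r/(2M)$, and $\alpha/2$ is consistent. The paper's one-step-plus-diagonal version is shorter; your finite-iteration version avoids the diagonal extraction and makes the role of the uniform gain more explicit. Either is a complete proof of the lemma.
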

\begin{proof}
    As $(X,d)$ is complete, the set of points of continuity of functions of Baire class $1$ on $X$ is residual. 
    To complete the proof, it is sufficient to show 
that $I=0$ on the set of points of continuity.
 We prove that by contradiction. Let $x$ be a point of continuity of $I$ such that $I(x)<\beta<0$. Then
    from the density of $X_0$, there exists a sequence $\{x_n\}_{n\in \BN} \subseteq X_0$, converging to $x$ and $I(x_n)\to I(x)$. 
Without loss of generality, we may assume that $I(x_n)<\beta$. For each $n\in \BN$ there exists sequence $\{x_{n,k}\}_{k\in \BN}$ satisfying the conditions 
given by the hypothesis of the lemma. By a diagonal argument we can find a subsequence $\{x_{n,k(n)}\}_{n\in \BN}\subseteq X_0$ such that $x_{n,k(n)}\to x$ and 
    \[
      \liminf_{n\to \infty}  I(x_{n,k(n)})\leq I(x_n)+\alpha\left(\beta\right).
    \]
    This contradicts the assumption that $x$ is a point of continuity of $I$.
\end{proof}

\begin{proof} [Proof of Theorem \ref{thm:abstr_exist}.] 
Let $X_0$ be the set of subsolutions to the abstract Euler system. $X_0$ consists of functions 
$\mathbf{u}: [0,T] \rightarrow L^2(U)$ taking values in a bounded subset $Y$ of $L^2(U)$.
Hence $Y$ is metrizable with respect to the weak topology of $L^2$. 
Correspondingly, we consider the metric $d$ naturally defined on $\CC([0,T];Y)$ which induces a topology
equivalent to the topology of $\CC([0,T];Y)$ as a subset of $\CC([0,T];L^2_w(U))$.
We denote by $X$ the completion of $X_0$ in $\CC([0,T];L^2_w(U))$ with respect to the metric $d$. 
Obviously, $X$ is bounded in $L^{\infty}(Q;\BR^3)$. The set $X_0$ is non-empty as it makes no difficulty to check that $\mathbf{u}(t) = \mathbf{u}_0$ with $\BV=\mathbf{0}$ defines a subsolution.

For each $\tau \in (0,T\slash 2)$, $I_{\tau}$ can be extended on a lower-semicontinuous functional on $X$ and therefore is of Baire class $1$. 
Indeed, observe that $\overbar{\mathbf{e}}$ is continuous from $(X,d)$ to $\CC(\overbar{Q})$, hence, the semicontinuity
follows from the case when $\overbar{\mathbf{e}}$ is a constant function (see \cite[Lemma 5]{DeleSzekArch}). 

Finally, a combination of Lemma \ref{lem:oscillatory}, \ref{lem:category} and \ref{lem:subsolconv} implies the
existence of residual sets 
\[
  S_{\tau} = \{x\in X \colon I_{\tau}(x) = 0\}.
\] 
The set $S=\cap_{n=1}^{\infty}S_{\frac{1}{n}}$ is also residual, especially nonempty and of infinite cardinality.
Due to Lemma \ref{lem:subsolconv}, all functions in $S$ are  weak solutions to the abstract Euler problem with
$\mathbf{u}(0, \mathbf{x}) = \mathbf{u}_0$ and such that \eqref{eq:energy constraint} holds. 
\end{proof}

\section{Proof of Lemma \ref{lem:oscillatory}}
\label{sec:osc_lem_proof}
We start with a special case of the oscillatory lemma when the operators $\BH$ and $\overbar{e}$ are not depending on $\mathbf{u}$. 
Let us define $\tilde{e}\colon \BR^3\times \BR^{3\times 3}_{0,sym}\times \BR^{3\times 3}_{0,sym} \to \BR$ by
\begin{equation*}
    \tilde{e}(\mathbf{u}, \BV, \BG) = \frac{3}{2}\lambda_{\max}\left(\mathbf{u}\otimes\mathbf{u} +\BG - \BV  \right).
\end{equation*}
For $f\in L^{\infty}\cap \CC(Q)$ and $\BG \in L^{\infty}\cap\CC(Q;\BR^{3\times 3}_{0,sym})$ we denote $X_{0,\BG,f}$ the set of all functions $\mathbf{u}$ satisfying
\begin{enumerate}
    \item $\mathbf{u}\in  C([0,T];L_{w}^2(U;\BR^3))\cap \CC((0,T)\times U;\BR^3)$,
    \item  exists $\BV\in L^{\infty}\cap \CC((0,T)\times U;\BR^{3\times 3}_{0,sym})$ 
    such that the pair $(\mathbf{u}, \BV)$ satisfies \eqref{eq:linear} in the sense of distributions on $(0,T)\times U$ and $\mathbf{u}\chi_{U}$ solves \eqref{eq:incompress} in the sense of distributions on $(0,T)\times \BR^3$,
    \item $\mathbf{u}(0)=\mathbf{u}_0$,
    \item \label{enum:cont_subsol_ineq} 
  for every $\tau>0$
    $\inf_{t\in (\tau,T),\,\mathbf{x}\in U}\left( f(t,\mathbf{x})-\tilde{e}(\mathbf{u}(t,\mathbf{x}),\BV(t,\mathbf{x}),\BG(t,\mathbf{x}))    \right)>0.$

\end{enumerate}
The following auxiliary result was proven in \cite{DonaFeirMarc} for $I_{\tau}$ defined using integrals with respect to time and space. 
In our case, the proof remains the same and we will omit it (see also \cite{DeleSzekArch} where the functional setting is the same as ours). 
\begin{lem}
    \label{lem:oscill1}
    Let $O=(\tau_1,\tau_2)\times U\subseteq Q$ be an open set, $\BG$ and $f$ be as above with  $f>0$ in $Q$.
    Assume that $\mathbf{u}\in X_{0,f,\BG}$. Then exists $\Lambda>0$ and sequences $\{\mathbf{w}_n\}_{n\in \BN}\subseteq \CD(O;\BR^3)$ and
    $\{\BW_n\}_{n\in \BN}\subseteq \CD(O;\BR^{3\times 3}_{0,sym})$ such that $(\mathbf{u}+\mathbf{w}_n,\BU+\BW_n)\in X_{0,f,\BG}$, 
    \[
        \mathbf{w}_n \to 0 \mbox{ in } \CC([0,T];L^2_w(U))
    \]
    and
    \begin{equation}
        \label{eq:fixed_oscillations}
        \liminf_{n\to \infty}\inf_{t\in(\tau_1,\tau_2)}\int_{U} |\mathbf{u}
        +\mathbf{w}_n|^2\de{x}\geq \inf_{t\in(\tau_1,\tau_2)}\int_{U} |\mathbf{u}|^2\de{\mathbf{x}}+\Lambda
        \left(
            \inf_{t\in(\tau_1,\tau_2)}\int_{U} f-\frac{1}{2}|\mathbf{u}|^2    \de{\mathbf{x}}
        \right)^2,
    \end{equation}
    where $\Lambda = \Lambda(\sup_{(t,\mathbf{x})\in Q}|f|)$ (namely, $\Lambda$ does not depend on $O$ or $\mathbf{u}_n$).
\end{lem}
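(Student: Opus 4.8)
The plan is to observe first that Lemma \ref{lem:oscill1} is precisely the frozen-coefficient oscillatory step of De Lellis and Sz\'ekelyhidi, modulo a harmless translation. Since $\BG$ and $f$ are fixed (independent of $\mathbf{u}$) on $O$, I would set $\tilde{\BV}=\BV-\BG$, so that the constraint functional becomes $\tilde{e}(\mathbf{u},\BV,\BG)=\frac{3}{2}\lambda_{\max}(\mathbf{u}\otimes\mathbf{u}-\tilde{\BV})$, the standard Euler $e$-function. The perturbation $(\mathbf{w}_n,\BW_n)$ must solve the \emph{homogeneous} linear system $\partial_t\mathbf{w}_n+\diver_{\mathbf{x}}\BW_n=0$, $\diver_{\mathbf{x}}\mathbf{w}_n=0$ (because $(\mathbf{u},\BV)$ already solves \eqref{eq:linear}--\eqref{eq:incompress} and the system is linear), and it enters $\tilde{e}$ through $\tilde{\BV}\mapsto\tilde{\BV}+\BW_n$; thus the cone of admissible oscillation directions for the linear system is unchanged by the presence of $\BG$. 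This reduces the statement to the classical perturbation property of the Euler differential inclusion in the $\CC([0,T];L^2_w)$ setting, the only genuine difference from \cite{DonaFeirMarc} being that $I_\tau$ is built from the pointwise-in-time energy $\inf_t\int_U$ rather than a space--time integral.

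The quantitative heart is the geometric lemma of \cite{DeleSzekAnnals,DeleSzekArch}: there is a constant depending only on $\sup_Q|f|$ such that, at any point where the background lies strictly inside the convex hull, i.e.\ $\frac{3}{2}\lambda_{\max}(\mathbf{u}\otimes\mathbf{u}-\tilde{\BV})<f$, one can find a direction $(\mathbf{a},\mathbf{A})$ in the wave cone of the linear system for which $(\mathbf{u}\pm\mathbf{a},\tilde{\BV}\pm\mathbf{A})$ still satisfies the (nonstrict) constraint and whose amplitude is controlled below by the energy gap,
\begin{equation*}
 |\mathbf{a}|^2\ \geq\ \frac{1}{C\big(\sup_Q|f|\big)}\Big(f-\tfrac{1}{2}|\mathbf{u}|^2\Big)^2 .
\end{equation*}
The square on the right is exactly what produces the quadratic term in \eqref{eq:fixed_oscillations}, and the dependence of $C$ only on $\sup_Q|f|$ is what makes $\Lambda$ independent of the time window.

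Next I would turn each algebraic direction into a genuine, compactly supported oscillatory solution of the homogeneous linear system. Using the standard potential construction for the Euler inclusion, to each wave-cone pair $(\mathbf{a},\mathbf{A})$ there corresponds a localized plane-wave solution oscillating at frequency $n$ inside a small space--time cube; multiplying by a fixed profile and a cutoff keeps it in $\CD(O)$, and the high-frequency oscillation forces $\mathbf{w}_n\to0$ in $\CC([0,T];L^2_w(U))$ by Riemann--Lebesgue. Because the background keeps a strict gap $f-\tilde{e}>0$ on the closure of the region and the amplitude is chosen within that gap, the perturbed pair still satisfies condition \ref{enum:cont_subsol_ineq}, so it remains in $X_{0,f,\BG}$. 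I would localize the gap on a finite cover of $\{\,(t,\mathbf{x}):\tau_1<t<\tau_2\,\}$ by small cubes on which $(\mathbf{u},\tilde{\BV},f)$ are nearly constant, run the construction on each cube with the locally optimal direction, and sum the (essentially disjointly supported) pieces. By weak convergence the cross terms drop out, so that for every $t\in(\tau_1,\tau_2)$
\begin{equation*}
 \int_U|\mathbf{u}+\mathbf{w}_n|^2\,\de{\mathbf{x}}\ \longrightarrow\ \int_U|\mathbf{u}|^2\,\de{\mathbf{x}}+\liminf_n\int_U|\mathbf{w}_n|^2\,\de{\mathbf{x}},
\end{equation*}
and the added energy is bounded below, uniformly in $t$, by $\Lambda\big(\inf_t\int_U f-\frac12|\mathbf{u}|^2\,\de{\mathbf{x}}\big)^2$ after summing the local lower bounds and applying Jensen/Cauchy--Schwarz to pass from the squared pointwise gap to the square of its spatial average.

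The main obstacle I anticipate is twofold: securing the lower bound on the added energy \emph{uniformly in time} across the whole slab $(\tau_1,\tau_2)$ --- rather than in a merely space--time averaged sense as in \cite{DonaFeirMarc} --- and ensuring that the finitely many localized perturbations can be superposed without their supports overlapping in a way that destroys either the linear constraints or the strict constraint inequality. Both points are handled exactly as in \cite{DeleSzekArch}, whose functional framework ($\CC([0,T];L^2_w)$ with the pointwise-in-time energy) coincides with ours; this is why, as remarked in the text, the argument carries over verbatim, and I would cite it for the technical patching and the convergence bookkeeping, the translation by $\BG$ requiring no new idea.
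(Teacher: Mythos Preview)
Your proposal is correct and follows exactly the line the paper intends: the paper itself omits the proof, merely noting that it is the frozen-coefficient oscillatory lemma of \cite{DonaFeirMarc} (space--time averaged functional) combined with the $\CC([0,T];L^2_w)$ time-infimum framework of \cite{DeleSzekArch}. Your reduction via $\tilde{\BV}=\BV-\BG$, the appeal to the geometric lemma for the quadratic energy gain with constant depending only on $\sup|f|$, the localized plane-wave construction, and the patching argument are precisely the ingredients of those references, so there is nothing to add.
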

Let us show that Lemma \ref{lem:oscillatory} follows from Lemma \ref{lem:oscill1} using a perturbation argument. 

\begin{proof}[Proof of Lemma \ref{lem:oscillatory}.]
If $\mathbf{u}\in X_0$ then there exists an increasing continuous function $\delta\colon (0,T)\to (0,+\infty)$
such that for any $s\in (0,T)$
\[
\inf_{t\in (s,T),\,x\in U}\left(\overbar{e}[\mathbf{u}]-e(\mathbf{u},\BV)    \right)>\delta(s).
\] 
Hence, $\mathbf{u}\in X_{0,\overbar{e}[\mathbf{u}]-\delta, \BH[\mathbf{u}]}$ and we obtain  sequences $\{\mathbf{w}_n\}_{n\in \BN}$ and $\{\BW_n\}_{n\in \BN}$ satisfying Lemma \ref{lem:oscill1} 
with $f=\overbar{e}[\mathbf{u}]-\delta$ and $\BG = \BH[\mathbf{u}]$.
Moreover, due to the boundedness of $\mathbf{w}_n$ and $\BW_n$, we have 
\[
      \mathbf{w}_n \to 0 \mbox{ in } \CC([0,T];L^p_w(U)) \quad \mbox{for any } p\in [1,\infty),
\]
see Lemma \ref{lem:subsol_limit}.
Inequality \eqref{eq:energy_increase} follows directly from \eqref{eq:fixed_oscillations} as $\overbar{e}[\mathbf{u}+\mathbf{w}_n]\to \overbar{e}[\mathbf{u}]$ uniformly in $Q$. 
Hence, to finish the proof, it is sufficient to check that $\mathbf{u}+\mathbf{w}_n \in X_0$ at least for  indices large enough. As $\mathbf{u}+\mathbf{w}_n\in X_{0,\overbar{e}[\mathbf{u}]-\delta, \BH[\mathbf{u}]}$, we get
\[
    e(\mathbf{u}+\mathbf{w}_n,\BV+\BW_n)=
    \tilde{e}(\mathbf{u}+\mathbf{w}_n,\BV+\BW_n,\BH[\mathbf{u}])
    +r_n< \overbar{e}[\mathbf{u}]-\delta + r_n
    =\overbar{e}[\mathbf{u+w_n}]-\delta + r_n + t_n,
\]
where
\[
    r_n=
       \tilde{e}(\mathbf{u}+\mathbf{w}_n,\BV+\BW_n,\BH[\mathbf{u}+\mathbf{w}_n])- \tilde{e}(\mathbf{u}+\mathbf{w}_n,\BV+\BW_n,\BH[\mathbf{u}])
\]
and
\[
    t_n = \overbar{e}[\mathbf{u}]-\overbar{e}[\mathbf{u+w_n}].
\]
The function $\BA\mapsto \lambda_{max}(\BA)$ restricted on the symmetric positive semidefinite matrices is equal to $\ell^{2}\to \ell^2$ operator norm, hence it is $1$--Lipschitz. Thus, using the continuity of  $\BH$ and $\overbar{e}$, we obtain 
\[
    r_n+t_n \to 0 \mbox{ uniformly in } [\tau,T]\times U.
\] 
Having in mind \eqref{eq:nonanticip}, we claim that $r_n+t_n = 0$ for $(t,\mathbf{x})\in (0,\tau)\times U$, therefore
\[
    e(\mathbf{u}+\mathbf{w}_n,\BV+\BW_n)<\overbar{e}[\mathbf{u+w_n}]-\frac{\delta}{2}
\]
 holds on $Q$ for sufficiently large $n$.
\end{proof}

\section{Dissipative solutions}
\label{sec:dissipative}
This Section is devoted to the proofs of Theorems \ref{thm:bous_main2} and \ref{thm:main2}.
Thanks to the reformulation of the Boussinesq and  the extended primitive equations in the framework of abstract Euler--type systems carried out in Section \ref{sec:reformulation}, 
Theorems \ref{thm:bous_main2} and \ref{thm:main2} can be reduced to prove the following more general theorem on the abstract system.
\begin{thm}
    \label{thm:general dissipative}
    Under the same hypotheses on $\BH$ and $\Pi$ of Theorem \ref{thm:abstr_exist}, there exists $\mathbf{u}_0\in L^{\infty}(U;\BR^3)$ for which we can find 
    infinitely many weak solutions to \eqref{eq:abs} emanating from $\mathbf{u}_0$ and such that the functional  
\begin{equation} \label{eq:energy constraint general}
E_{abs}(t):= \int_U \left(\frac{3}{2} \Pi[\mathbf{u}](t, \mathbf{x})+\frac{1}{2}|\mathbf{u}(t, \mathbf{x})|^2\right) \de{\mathbf{x}} \quad\text{is non--increasing on}\; [0,T).
\end{equation}
\end{thm}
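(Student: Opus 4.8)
The overall strategy is to run the same Baire-category machinery as in the proof of Theorem \ref{thm:abstr_exist}, but now over a space of subsolutions whose ``energy profile'' is arranged so that the resulting weak solutions automatically dissipate $E_{abs}$. Recall that for any weak solution produced by Theorem \ref{thm:abstr_exist} one has the pointwise identity \eqref{eq:energy constraint}, i.e. $\frac{3}{2}\Pi[\mathbf{u}]+\frac{1}{2}|\mathbf{u}|^2=\frac{3}{2}Z(t)$ a.e.\ in $U$, for any prescribed continuous $Z$ with $\sup_{[0,T]}Z>\overbar\Pi$. Integrating over $U$ gives $E_{abs}(t)=\frac{3}{2}|U|\,Z(t)$. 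Hence it suffices to choose the target energy function $Z$ to be non-increasing on $[0,T)$ (and still satisfying $\sup_{[0,T]}Z>\overbar\Pi$, which is automatic if e.g.\ $Z$ is constant and large enough, or if $Z(0)>\overbar\Pi$), and then to exhibit an initial datum $\mathbf{u}_0$ that is compatible with this profile at $t=0$, i.e.\ for which the subsolution space is nonempty and the limiting solutions actually attain $E_{abs}(0)=\frac{3}{2}|U|\,Z(0)$ from the start rather than jumping up.

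First I would fix $Z$ to be, say, a smooth strictly decreasing function on $[0,T]$ with $Z(0)>\overbar\Pi$, and correspondingly define $\overbar{e}[\mathbf{u}]=\frac{3}{2}(Z(t)-\Pi[\mathbf{u}])$ as in Section \ref{sec:reformulation}; by \eqref{eq:bound_pressure} this is still bounded and positive for $t$ near $0$ if $Z(0)-\overbar\Pi>0$, and one can shift $Z$ up by a constant to make $Z(t)>\overbar\Pi$ for all $t$ if needed (this only raises the constant energy baseline and does not affect monotonicity). The point of Theorem \ref{thm:main2}/\ref{thm:bous_main2} compared with Theorem \ref{thm:main1}/\ref{thm:bous_main1} is that there the solutions satisfy $\liminf_{t\to0^+}E(t)>E(0)$, i.e.\ they jump; to get a dissipative solution we must instead force continuity (from the right) of the energy at $t=0$. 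This is done by choosing $\mathbf{u}_0$ itself to already saturate the constraint: pick $\mathbf{u}_0\in L^\infty(U;\BR^3)$ with $\diver_\mathbf{x}(\mathbf{u}_0\chi_U)=0$ and with $\frac{1}{2}|\mathbf{u}_0|^2=\overbar{e}[\mathbf{u}_0](0,\cdot)=\frac{3}{2}(Z(0)-\Pi[\mathbf{u}_0](0,\cdot))$ a.e.\ in $U$. Because $\Pi$ is continuous and bounded, such a choice is possible — e.g.\ by taking $\mathbf{u}_0$ with $|\mathbf{u}_0|^2$ equal to the prescribed (positive, bounded) right-hand side; on a domain with the no-flow boundary condition one uses a divergence-free vector field of the correct pointwise modulus, whose existence follows from the same elementary constructions used elsewhere in the paper (and the remark on $\Omega,\theta_0$ shows $\Pi$ at $t=0$ depends only on the data, not on $\mathbf{u}_0$ itself if $\Pi$ is nonanticipative, which simplifies the fixed-point aspect).

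With $Z$ and $\mathbf{u}_0$ chosen this way, I would then repeat verbatim the argument of the proof of Theorem \ref{thm:abstr_exist}: the set $X_0$ of subsolutions (with this $\overbar{e}$ and this $\mathbf{u}_0$) is nonempty (the constant-in-time $\mathbf{u}(t)\equiv\mathbf{u}_0$, $\BV\equiv0$ works, since at $t=0$ equality holds and for $t>0$ strict inequality $e<\overbar e$ holds by continuity and the strict positivity of $Z(t)-\Pi$), bounded in $L^\infty$, and metrizable; $I_\tau$ is of Baire class $1$; Lemma \ref{lem:oscillatory} and Lemma \ref{lem:category} give a residual set $S=\cap_n S_{1/n}$ of $\mathbf{u}$ with $I_\tau(\mathbf{u})=0$ for all $\tau>0$, hence by Lemma \ref{lem:subsolconv} these are weak solutions of \eqref{eq:abs} satisfying \eqref{eq:energy constraint}, i.e.\ $\frac{3}{2}\Pi[\mathbf{u}]+\frac{1}{2}|\mathbf{u}|^2=\frac{3}{2}Z(t)$. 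Integrating, $E_{abs}(t)=\frac{3}{2}|U|Z(t)$, which is non-increasing on $[0,T)$ by the choice of $Z$; this is \eqref{eq:energy constraint general}. Since $S$ is residual it is uncountable, giving infinitely many such solutions emanating from the single datum $\mathbf{u}_0$. Finally, translating back through the reformulations of Section \ref{sec:reformulation}: for the Boussinesq system, $E_{abs}(t)=\int_U(\frac{3}{2}\Pi_{Bous}[\mathbf{u}]+\frac12|\mathbf{u}|^2)=\int_U(z\Theta[\mathbf{u}]+\frac12|\mathbf{u}|^2)=E_{Bous}(t)$, so monotonicity of $E_{abs}$ is exactly the strong energy inequality, yielding Theorem \ref{thm:bous_main2}; for the extended primitive equations $\Pi_{Prim}\equiv0$ in the pressureless normalization so $E_{abs}(t)=\frac12\int_U|\mathbf{u}|^2=E_{Prim}(t)$, giving Theorem \ref{thm:main2}.

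The main obstacle I anticipate is constructing the admissible initial datum $\mathbf{u}_0$: one needs a divergence-free (in the $\chi_U$-extended sense, with the no-flow boundary condition), bounded vector field whose pointwise squared modulus equals the prescribed function $3(Z(0)-\Pi[\mathbf{u}_0](0,\cdot))$, and one must check this is consistent (the target is positive and bounded, which it is by $Z(0)>\overbar\Pi$ and boundedness of $\Pi$) and that $\Pi[\mathbf{u}_0](0,\cdot)$ does not create a genuine implicit obstruction — here the nonanticipative property \eqref{eq:nonanticip} is what saves us, since it forces $\Pi[\mathbf{u}_0](0,\cdot)$ and $\BH[\mathbf{u}_0](0,\cdot)$ to be determined by the data alone, decoupling the fixed-point issue. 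Everything else is a direct re-run of the machinery already in place, so I would keep the write-up short and refer back to the proof of Theorem \ref{thm:abstr_exist} for the Baire-category details.
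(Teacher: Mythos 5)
Your overall strategy --- choose $Z$ non--increasing (in fact constant works), find a datum $\mathbf{u}_0$ saturating $\tfrac12|\mathbf{u}_0|^2=\overbar{e}[\mathbf{u}_0]$ a.e., rerun the Baire category argument, and integrate \eqref{eq:energy constraint} --- is exactly the paper's strategy. However, the step you dismiss as elementary is precisely the crux, and your treatment of it has a genuine gap. You assert that a bounded vector field with $\diver_{\mathbf{x}}(\mathbf{u}_0\chi_U)=0$ in $\CD'(\BR^3)$ and prescribed pointwise modulus $|\mathbf{u}_0|^2=3(Z(0)-\Pi[\mathbf{u}_0](0,\cdot))$ can be produced ``by the same elementary constructions used elsewhere in the paper''; no such construction appears in the paper, and none is elementary: simultaneously prescribing the divergence constraint, the weak no--flow condition, and the exact pointwise modulus is the classical ``wild initial data'' problem. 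The paper resolves it by invoking Lemma \ref{thm:strong continuity} (a version of Feireisl's Theorem 6.1), which is itself a convex--integration/Baire--category construction producing a subsolution that attains $\tfrac12|\mathbf{u}(\tau,\cdot)|^2=\overbar{e}[\mathbf{u}](\tau,\cdot)$ at some interior time $\tau\in\mathcal{R}$; the datum is then \emph{defined} as the time-$\tau$ trace $\mathbf{u}_0=\mathbf{u}(\tau,\cdot)$. Without this (or an equivalent) lemma your construction of $\mathbf{u}_0$ does not go through.

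A second, related gap: your claim that $X_0$ is non--empty because the constant extension $\mathbf{u}(t)\equiv\mathbf{u}_0$, $\BV\equiv 0$ is a subsolution is false for a saturating datum. By \eqref{eq:bound_v} one has $e(\mathbf{u}_0,0)\geq \tfrac12|\mathbf{u}_0|^2=\overbar{e}[\mathbf{u}_0]$ at $t=0$ (with equality only in the degenerate case $\BH[\mathbf{u}](0)=-\mathbf{u}_0\odot\mathbf{u}_0$), so by continuity the required strict inequality $e<\overbar{e}$ fails on $(\tau,T)\times U$ for small $\tau$ --- the very feature that forces energy continuity at $t=0$ is incompatible with the trivial subsolution. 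The paper instead exhibits a non--trivial element of $X_0$ by time--shifting (and periodizing) the subsolution furnished by Lemma \ref{thm:strong continuity}, which satisfies the strict inequality away from $t=\tau$ and equality exactly at $t=\tau$. The remainder of your argument (integration of the pointwise identity to get $E_{abs}(t)=\tfrac32|U|\,Z(t)$, and the translation back to $E_{Bous}$ and $E_{Prim}$) is correct and matches the paper.
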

\begin{rem}
Thanks to Theorem \ref{thm:abstr_exist}, and in particular to the property \eqref{eq:energy constraint}, the conclusion that the functional $E_{abs}(t)$ is non--increasing on $(0,T)$ 
can be achieved for any $\mathbf{u}_0 \in L^{\infty}(U;\BR^3)\cap \CC(U;\BR^3)$ 
    with $\diver_\mathbf{x} \mathbf({u}_0\chi_{U}) =0$ by simply choosing the function $Z(t)$ to be non--increasing on $(0,T)$. 
But in order to obtain dissipative solutions for the Boussinesq and primitive equations the property \eqref{eq:energy constraint general} is required up to time $t=0$: this forces the construction
of suitable initial data $\mathbf{u}_0\in L^{\infty}(U;\BR^3)$.
\end{rem}

We now show how Theorems \ref{thm:bous_main2} and \ref{thm:main2} follow from Theorem \ref{thm:general dissipative}.
\begin{proof}[Proofs of Theorems \ref{thm:bous_main2} and \ref{thm:main2}]
Due to the reformulations of the Boussinesq and extended primitive equations of Section \ref{sec:reformulation},
the respective choices for $\Pi$ are $\Pi_{Bous}[\mathbf{u}] = \frac{2}{3}z\Theta[\mathbf{u}]$ and $\Pi_{Prim}=0$.
These choices allow to obtain from $E_{abs}$ exactly $E_{Bous}$ and $E_{Prim}$ respectively. Hence the conclusion of Theorem \ref{thm:general dissipative} implies the existence
of infinitely many dissipative solutions to the Boussinesq and extended primitive equations starting from suitably constructed initial data $\mathbf{u}_0$, as stated 
in Theorems \ref{thm:bous_main2} and \ref{thm:main2}.
\end{proof}

\subsection{Construction of initial data}
The abstract Euler system \eqref{eq:abs} fits the framework 
introduced by Feireisl in \cite{FeirConvexInt}. In particular, we can apply Theorem 6.1 therein to obtain strong continuity in $L^2$ at time
$t=0$. For the sake of completeness, we report here a version of \cite[Theorem 6.1]{FeirConvexInt} adapted to our context. For other variants of the following result 
we refer to \cite{DeleSzekArch} and also to \cite{Chiodaroli}, \cite{ChioFeirKrem}.

\begin{lem}\label{thm:strong continuity}
 Let $\BH$ and $\Pi$ satisfy the hypotheses of Theorem \ref{thm:abstr_exist}. Then there exist a set of times $\mathcal{R}\subset (0,T)$ dense in $(0,T)$ such that 
for any $\tau \in \mathcal{R}$ there is $\mathbf{u}\in X$ with the following properties
\begin{itemize}
 \item[(i)]$\mathbf{u} \in C( ((0, \tau) \cup (\tau , T)) \times U)\cap C([0, T], L^2_w)$,  $\mathbf{u}(0, \cdot)= \mathbf{0}$,
 \item[(ii)]there exists $\BV  \in  C( ((0, \tau) \cup (\tau , T)) \times U; \BR^{3\times 3}_{0,sym})$ the pair $(\mathbf{u}, \BV)$ satisfies
 \eqref{eq:linear} in the sense of distributions on $Q$ and $\mathbf{u}\chi_U$ solves \eqref{eq:incompress} in the sense of distributions on $(0,T)\times \BR^3$,
 \item[(iii)] $\left( \overbar{e}[\mathbf{u}](t,\mathbf{x})-e(\mathbf{u}(t,\mathbf{x}),\BV(t,\mathbf{x}))    \right)>0$ for all $(t,\mathbf{x})\in ((0, \tau)\cup(\tau , T)) \times U$,
 \item[(iv)]$\frac{1}{2} |\mathbf{u}(\tau, \mathbf{x})|^2= \overbar{e}[\mathbf{u}](\tau,\mathbf{x})$ a.e. in $U$
\end{itemize}
where we recall that 
\[
    \overbar{e}[\mathbf{u}](t,\mathbf{x}) = \frac{3}{2}\left( Z(t)-\Pi[\mathbf{u}](t,\mathbf{x}) \right) 
\]
for a continuous function $Z(t)$ satisfying $\sup_{t\in [0,T]} Z(t)>\overbar{\Pi}$.
\end{lem}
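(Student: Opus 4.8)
The plan is to prove Lemma~\ref{thm:strong continuity} as an adaptation of \cite[Theorem~6.1]{FeirConvexInt} to the present framework, in which the operators $\BH$ and $\Pi$ are allowed to depend on $\mathbf u$. Throughout I fix the initial datum $\mathbf u_0=\mathbf 0$ and a continuous function $Z$ with $\sup_{[0,T]}Z>\overbar{\Pi}$, chosen large enough (which is admissible, thanks to the invariance of the abstract system under adding a continuous function of $t$ to $\Pi$, cf.\ the discussion preceding \eqref{eq:nonlin_separ}) that $(\mathbf u,\BV)=(\mathbf 0,\mathbf 0)$ is itself a subsolution; I keep the metric space $(X,d)$, the set $X_0$ of subsolutions and the functionals $\overbar{e}[\cdot]$ and $e(\cdot,\cdot)$ from the proof of Theorem~\ref{thm:abstr_exist}.

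The core of the argument is a time--localized convex integration scheme centred at a prescribed time $\tau\in(0,T)$. Fix a nested family of open intervals $(0,T)\supseteq I_0\supseteq I_1\supseteq\dots$ with $\tau\in I_k$ for every $k$ and $\bigcap_{k}\overbar{I_{k}}=\{\tau\}$. Starting from $(\mathbf u^{(0)},\BV^{(0)})=(\mathbf 0,\mathbf 0)$, I would construct iterates $(\mathbf u^{(k)},\BV^{(k)})$ by applying the Oscillatory Lemma~\ref{lem:oscill1} on the slab $O=I_{k-1}\times U$ (so that each increment $\mathbf w_n,\BW_n$ is supported in $I_{k-1}\times U$), passing at step $k$ to an index so large that: (a) $\|\mathbf u^{(k)}-\mathbf u^{(k-1)}\|_{\CC([0,T];L^2_w(U))}\le 2^{-k}$; and (b) the time--localized energy defect $g_k:=\inf_{t\in I_{k-1}}\int_U\bigl(\overbar{e}[\mathbf u^{(k)}]-\tfrac12|\mathbf u^{(k)}|^2\bigr)(t,\mathbf x)\,\de{\mathbf x}$ shrinks quadratically, $g_k\le g_{k-1}-\tfrac12 c(E)\,g_{k-1}^2$, the uniform gain being \eqref{eq:energy_increase}. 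The perturbation errors caused by the $\mathbf u$--dependence of $\BH$ and $\overbar{e}$ are absorbed exactly as in the proof of Lemma~\ref{lem:oscillatory}, via the non--anticipativity \eqref{eq:nonanticip}. Since the step--$k$ increment is supported in $I_{k-1}$, all later iterates coincide with $\mathbf u^{(k-1)}$ on $([0,T]\setminus\overbar{I_{k-1}})\times U$; hence the limit $\mathbf u=\lim_k\mathbf u^{(k)}$ in $\CC([0,T];L^2_w(U))$ agrees, on a neighbourhood of every point of $((0,\tau)\cup(\tau,T))\times U$, with a smooth \emph{strict} subsolution, which yields properties (i)--(iii) together with $\mathbf u(0,\cdot)=\mathbf 0$.

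It remains to secure the saturation (iv), namely $\tfrac12|\mathbf u(\tau,\cdot)|^2=\overbar{e}[\mathbf u](\tau,\cdot)$ a.e.\ in $U$. From $g_k\to0$ I would extract times $t_k\in I_{k-1}$ — necessarily $t_k\to\tau$ — along which the spatial defect $\int_U(\overbar{e}[\mathbf u^{(k)}]-\tfrac12|\mathbf u^{(k)}|^2)(t_k,\cdot)\,\de{\mathbf x}$ tends to $0$; combining this with the continuity of $\Pi$ on $\CC([0,T];L^q_w(U))$, the convergence $\mathbf u^{(k)}\to\mathbf u$, and the pointwise inequality $\tfrac12|\mathbf u(\tau,\mathbf x)|^2\le\overbar{e}[\mathbf u](\tau,\mathbf x)$ for a.e.\ $\mathbf x$ (which is \eqref{eq:bound_v} together with the limit statement in Lemma~\ref{lem:subsolconv}) should pin down the required a.e.\ equality. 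Finally, since the point $\tau$ enters only through the nesting $\overbar{I_{k}}\downarrow\{\tau\}$, the construction applies simultaneously to every $\tau$ in a prescribed countable dense subset of $(0,T)$, which we take as $\mathcal R$; alternative realizations of this step can be found in \cite{DeleSzekArch},~\cite{Chiodaroli} and \cite{ChioFeirKrem}.

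I expect the real obstacle to be the last step — upgrading the limit to the \emph{exact} saturation at $t=\tau$ — because the oscillatory increments converge to $0$ only weakly in $L^2$, so that a priori kinetic energy could be lost in the limit precisely at the retained slice. This is the point where the time--localized, nested form of the scheme is essential: one has to keep the gain $c(E)$ in \eqref{eq:energy_increase} independent of $\mathbf u$, of $\tau$ and of the intervals $I_k$, and to propagate the $\BH$-- and $\Pi$--errors through \eqref{eq:nonanticip}, so that the defect $g_k$ is genuinely driven to $0$ while strict admissibility is preserved away from $\{t=\tau\}$. This is exactly the mechanism of \cite[Theorem~6.1]{FeirConvexInt}, whose proof transfers to our setting with the modifications indicated above.
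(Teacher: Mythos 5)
The paper offers no proof of this lemma beyond the citation of \cite[Theorem~6.1]{FeirConvexInt}, and your overall strategy --- a convex--integration iteration localized on a nested family of time intervals shrinking to $\tau$, with the $\mathbf{u}$--dependence of $\BH$ and $\Pi$ absorbed through \eqref{eq:nonanticip} as in Lemma~\ref{lem:oscillatory} --- is indeed the route that citation takes. However, the step you yourself flag as ``the real obstacle'' is a genuine gap, and the way you propose to close it does not work. Driving $g_k=\inf_{t\in I_{k-1}}\int_U\bigl(\overbar{e}[\mathbf{u}^{(k)}]-\tfrac12|\mathbf{u}^{(k)}|^2\bigr)\de{\mathbf{x}}$ to zero only produces times $t_k\to\tau$ at which the integrated defect of the $k$--th \emph{iterate} is small; passing to the limit along $\mathbf{u}^{(k)}(t_k)\rightharpoonup\mathbf{u}(\tau)$, weak lower semicontinuity gives $\int_U|\mathbf{u}(\tau)|^2\le\liminf_k\int_U|\mathbf{u}^{(k)}(t_k)|^2$, which is the wrong direction: kinetic energy may be lost exactly on the slice $t=\tau$, leaving $\tfrac12|\mathbf{u}(\tau)|^2<\overbar{e}[\mathbf{u}](\tau)$ on a set of positive measure. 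Nothing in your scheme excludes this, because the oscillatory lemma controls only the infimum over $I_{k-1}$ of the defect, not its value at the prescribed time $\tau$; the defect of $\mathbf{u}^{(k)}$ at $\tau$ itself may stay bounded away from zero for every $k$.

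Two further ingredients are needed, and both are present in \cite{FeirConvexInt}. First, the intervals must be chosen \emph{adaptively}: $I_{k+1}$ is a sufficiently small neighbourhood, inside $I_k$, of a near--minimizer of $t\mapsto\int_U\bigl(\overbar{e}[\mathbf{u}^{(k+1)}]-\tfrac12|\mathbf{u}^{(k+1)}|^2\bigr)\de{\mathbf{x}}$ (a continuous function, since the iterates are continuous subsolutions), so that the supremum of this defect over $\overbar{I_{k+1}}$ --- and hence its value at every point of $\bigcap_k\overbar{I_{k}}$ --- exceeds the controlled infimum by at most $2^{-k}$. In particular $\tau$ is an \emph{output} of the construction, not an input; this is precisely why the lemma asserts only a dense set $\mathcal{R}$ of attainable times (density follows by starting the iteration inside an arbitrary subinterval of $(0,T)$), and your closing claim that the scheme runs ``simultaneously for every $\tau$ in a prescribed countable dense subset'' should be replaced by ``for some $\tau$ in any prescribed subinterval''. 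Second, to prevent energy loss in the limit one must choose at each step the oscillation index so large that the increment is nearly orthogonal in $L^2(U)$ to all previous iterates \emph{uniformly in $t$}; expanding the square, this yields $\int_U|\mathbf{u}(t)|^2\ge\int_U|\mathbf{u}^{(k)}(t)|^2-\varepsilon_k$ for every $t$ with $\varepsilon_k\to0$, which combined with the adaptive choice forces the limit defect at $\tau$ to vanish in the integrated sense; the pointwise inequality \eqref{eq:bound_v} then upgrades this to the a.e.\ equality (iv). A more minor omission: preserving the strict inequality (iii) at times lying \emph{after} the perturbation intervals requires the errors $r_n+t_n$ of Lemma~\ref{lem:oscillatory} to be made summable over the infinitely many steps, since \eqref{eq:nonanticip} protects only the times preceding the support of each increment.
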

\begin{rem}
Lemma \ref{thm:strong continuity} provides subsolutions which are strongly continuous at the point $\tau$ and allow to obtain the desired strong energy conditions.
\end{rem}
For the proof we refer the reader to \cite{FeirConvexInt}.

\begin{proof}[Proof of Theorem \ref{thm:general dissipative}]
The proof consists in finding an initial datum $\mathbf{u}_0\in L^{\infty}(U;\BR^3)$ and a function $Z(t)$ with the following properties
\begin{equation} \label{eq:energy in 0}
\frac{1}{2} |\mathbf{u}_0 (\mathbf{x})|^2= \overbar{e}[\mathbf{u}_0](\mathbf{x}) \quad \text{a.e. in}\; U; 
\end{equation}
\begin{itemize}
 \item $Z(t)$ continuous on $[0,T]$ and $\sup_{t\in [0,T]} Z(t)>\overbar{\Pi}$,
 \item $Z'(t)\leq 0$ for all $t\in [0, T)$
\end{itemize}
and such that the set $X_0$ of subsolutions $\mathbf{u}$ associated to this datum is non--empty.
First of all, we notice that we can easily choose $Z(t)= C_Z$ for some constant $C_Z>\overbar{\Pi}$.
Once $Z$ has been chosen, then we apply Lemma \ref{thm:strong continuity} which provides the existence of 
a time $\tau\in \mathcal{R}$ and a function $\mathbf{u}$ for which (i)--(iv) hold.
We define the initial datum $\mathbf{u}_0$ to be $\mathbf{u}_0( \cdot) = \mathbf{u}(\tau, \cdot)$ in $U$.
To such a datum we associate, as in Section \ref{sec:reformulation}, the set of subsolutions $X_0$ and we can prove that it is non--empty by choosing as eligible element
the following subsolution
\begin{equation*}
 \bar{\mathbf{u}}(t, \mathbf{x})= \begin{cases}
                                    \mathbf{u}(t+\tau, \mathbf{x})  \quad &\text{for} \, t\in [0, T- \tau]\\
                                    \mathbf{u}(t- (T-  \tau), \mathbf{x})  \quad &\text{for} \, t\in [ T- \tau, T]
                                  \end{cases}
\end{equation*}
with relative matrix field $\bar{\BV}$ analogously defined. 
Indeed from \eqref{eq:energy in 0}, redoing the proof of Theorem \ref{thm:abstr_exist}, we would now obtain infinitely many solutions to \eqref{eq:abs} emanating from $\mathbf{u}_0$ 
and such that
\begin{equation*} 
 \left(\frac{3}{2} \Pi[\mathbf{u}](t, \mathbf{x})+\frac{1}{2}|\mathbf{u}(t, \mathbf{x})|^2\right) = \frac{3}{2} C_Z \quad \text{for all} \; t\in [0,T) \; \text{and a.e. in}\; U
 \end{equation*}
(we remark that the equality now holds up to time $t=0$) which implies Theorem \ref{thm:general dissipative}.
 
\end{proof}

\section{Appendix}
\label{sec:appendix}
\subsection{The Lam\'e system}
Let us denote 
\[
    \BD_0(\mathbf{v}) = \left(\nabla \mathbf{v} + \nabla \mathbf{v}^T- \frac{2}{3}\diver_\mathbf{x} \mathbf{v} \BI\right)
\]
and let $\mathbf{g}\colon U \to \BR^3$. The Dirichlet boundary value problem for the Lam\'e system is a  
whether there exists a function $\mathbf{v} \colon U \to \BR^3$ with zero trace on $\partial \Omega$ such that
\begin{equation}
    \label{eq:lame}
    \diver_{\mathbf{x}}\BD_0(\mathbf{v}) =\mathbf{g}.
\end{equation}

\begin{lem}
    Let $U\subseteq \BR^3$, $\partial U\in \CC^2$, $\mathbf{g}\in L^p(U,\BR^3)$ and $p\in(1,\infty)$. Then there exists a unique 
    $\mathbf{v}\in W^{2,p}(U,\BR^3)$ with zero trace satisfying \eqref{eq:lame} almost everywhere in $U$ and the operator 
    $\mathbf{g} \mapsto \mathbf{v} \colon L^p(U;\BR^3)\to W^{2,p}(U;\BR^3)$ is continuous.
\end{lem}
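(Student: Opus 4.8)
\emph{Sketch of proof.}
Carrying out the divergence one sees that the system is, up to notation, the classical Lam\'e system:
\[
    \diver_{\mathbf{x}}\BD_0(\mathbf{v}) = \Delta\mathbf{v} + \tfrac{1}{3}\nabla\left(\diver_{\mathbf{x}}\mathbf{v}\right),
\]
whose principal symbol $\xi\mapsto|\xi|^2\BI + \tfrac{1}{3}\,\xi\otimes\xi$ is a positive definite matrix for every $\xi\in\BR^3\setminus\{0\}$ (indeed $(|\xi|^2\BI+\tfrac13\xi\otimes\xi)\eta\cdot\eta = |\xi|^2|\eta|^2+\tfrac13(\xi\cdot\eta)^2\ge|\xi|^2|\eta|^2$). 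Hence the system is strongly elliptic (it satisfies the Legendre--Hadamard condition) and, being properly elliptic, the homogeneous Dirichlet condition complements it in the sense of Agmon--Douglis--Nirenberg. The plan is to feed this into the $L^p$-theory for elliptic systems and to remove the resulting ambiguity by an elementary energy identity.

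First I would record the a priori estimate: since $\partial U\in\CC^2$, for each $p\in(1,\infty)$ there is $C=C(U,p)$ with
\[
    \|\mathbf{v}\|_{W^{2,p}(U)}\le C\left(\|\diver_{\mathbf{x}}\BD_0(\mathbf{v})\|_{L^p(U)} + \|\mathbf{v}\|_{L^p(U)}\right)
\]
for all $\mathbf{v}\in W^{2,p}(U;\BR^3)\cap W^{1,p}_0(U;\BR^3)$, and moreover the operator $L_p\colon\mathbf{v}\mapsto\diver_{\mathbf{x}}\BD_0(\mathbf{v})$ from $W^{2,p}\cap W^{1,p}_0$ into $L^p$ is Fredholm. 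For $p=2$ the weak formulation has the bilinear form $a(\mathbf{v},\boldsymbol{\varphi}) = \int_U\BD_0(\mathbf{v}):\nabla\boldsymbol{\varphi}\,\de{\mathbf{x}} = 2\int_U\mathcal{E}_0(\mathbf{v}):\mathcal{E}_0(\boldsymbol{\varphi})\,\de{\mathbf{x}}$, where $\mathcal{E}(\mathbf{v})=\tfrac12(\nabla\mathbf{v}+\nabla\mathbf{v}^T)$ is the symmetrised gradient and $\mathcal{E}_0(\mathbf{v}) = \mathcal{E}(\mathbf{v}) - \tfrac13(\diver_{\mathbf{x}}\mathbf{v})\BI$ its traceless part; combining the pointwise relation $|\mathcal{E}(\mathbf{v})|^2 = |\mathcal{E}_0(\mathbf{v})|^2 + \tfrac13(\diver_{\mathbf{x}}\mathbf{v})^2$ with the Korn identity $2\int_U|\mathcal{E}(\mathbf{v})|^2\,\de{\mathbf{x}} = \int_U\left(|\nabla\mathbf{v}|^2 + (\diver_{\mathbf{x}}\mathbf{v})^2\right)\de{\mathbf{x}}$, valid on $W^{1,2}_0$, gives $a(\mathbf{v},\mathbf{v})\ge\|\nabla\mathbf{v}\|^2_{L^2(U)}$, so $a$ is coercive on $W^{1,2}_0$ by Poincar\'e. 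Lax--Milgram together with the estimate above then makes $L_2$ an isomorphism onto $L^2$, and hence $L_p$ has Fredholm index $0$ for every $p\in(1,\infty)$.

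Next I would establish injectivity of $L_p$. If $\mathbf{v}\in W^{1,2}_0$ and $\diver_{\mathbf{x}}\BD_0(\mathbf{v})=0$, testing with $\mathbf{v}$ gives $a(\mathbf{v},\mathbf{v})=0$, whence $\nabla\mathbf{v}=0$ and $\mathbf{v}=0$. For general $p\in(1,\infty)$ any $\mathbf{v}\in\ker L_p$ is, by interior and up-to-the-boundary elliptic regularity (bootstrapping the $L^q$-estimate, since $\partial U\in\CC^2$), smooth on $\overbar{U}$; in particular $\mathbf{v}\in W^{1,2}_0$, so the case $p=2$ forces $\mathbf{v}=0$. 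Thus $L_p$ is an injective Fredholm operator of index $0$, hence a bounded linear bijection of $W^{2,p}(U;\BR^3)\cap W^{1,p}_0(U;\BR^3)$ onto $L^p(U;\BR^3)$; by the open mapping theorem its inverse $\mathbf{g}\mapsto\mathbf{v}$ is bounded, which is exactly the claimed existence, uniqueness and continuity.

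The hard part will be the triviality of the kernel rather than the $L^p$ machinery: the a priori estimate, the Fredholm property and the vanishing of the index are standard once strong ellipticity and the complementing condition for Dirichlet data are in place, whereas uniqueness genuinely uses the algebraic identity $\BD_0(\mathbf{v}) = 2\,\mathcal{E}_0(\mathbf{v})$ together with the Korn-type bound $a(\mathbf{v},\mathbf{v})\ge\|\nabla\mathbf{v}\|_{L^2}^2$ at $p=2$, and the observation — via elliptic regularity — that $\ker L_p$ does not depend on $p$.
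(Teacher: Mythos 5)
Your proposal is correct and follows essentially the same route as the paper: both hinge on verifying the Legendre--Hadamard condition via the identity $|\xi|^2|\eta|^2+\tfrac13(\xi\cdot\eta)^2\ge|\xi|^2|\eta|^2$ and then invoking the standard $L^p$ theory for constant-coefficient elliptic systems with Dirichlet data. You simply unpack the machinery (ADN estimate, Fredholm index, Korn-type coercivity for uniqueness) that the paper delegates to a citation.
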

\begin{proof}
  We only show that the elliptic operator in \eqref{eq:lame} satisfies the Legendre-Hadamard conditions. 
As the operator has constant coefficients, the existence, uniqueness and regularity
follows directly from the standard theory of elliptic systems (see for example \cite{GiaqMart}).
Let us denote $A^{\alpha,\beta}_{i,j}$, where $\alpha$, $\beta$, $i$, $j\in \{1,2,3\}$ the coefficients of the elliptic system
\eqref{eq:lame} (for the notation check \cite{GiaqMart}).
Then
\begin{equation*}
  \sum_{\alpha, \beta, i, j=1}^{3}A^{\alpha, \beta}_{i,j}\xi_{\alpha}\xi_{\beta}\eta^{i}\eta^j
  = \xi\otimes \eta : \left(\xi\otimes \eta + (\xi\otimes \eta)^T - \frac{2}{3} \xi\cdot \eta \BI\right)
  = |\xi|^2 |\eta|^2+\frac{1}{3}|\xi\cdot \eta|^2 \geq |\xi|^2 |\eta|^2.
\end{equation*}
\end{proof}
\begin{cor}
    \label{cor:tosym}
    Let $U\subseteq \BR^3$, $\partial U\in \CC^2$ and $p\in(1,\infty)$. Then there exists a continuous
    operator $\BG \colon L^p(U,\BR^3) \to W^{1,p}(U; \BR^{3\times3}_{sym,0})$ such that
    \[
        \diver_\mathbf{x} (\BG[\mathbf{g}]))=\mathbf{g}.
    \]  
\end{cor}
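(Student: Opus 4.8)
The plan is to read off the operator $\BG$ directly from the solution operator of the Lam\'e system provided by the preceding lemma. Given $\mathbf{g}\in L^p(U;\BR^3)$, I would let $\mathbf{v}\in W^{2,p}(U;\BR^3)$ be the unique zero-trace solution of $\diver_\mathbf{x}\BD_0(\mathbf{v})=\mathbf{g}$ guaranteed by the lemma, and simply set $\BG[\mathbf{g}]:=\BD_0(\mathbf{v})$. By the definition of $\BD_0$ the matrix $\BD_0(\mathbf{v})$ is manifestly symmetric, and it is traceless because $\operatorname{tr}(\nabla\mathbf{v}+\nabla\mathbf{v}^T)=2\diver_\mathbf{x}\mathbf{v}$ is cancelled by $\operatorname{tr}\!\big(\tfrac{2}{3}(\diver_\mathbf{x}\mathbf{v})\,\BI\big)=2\diver_\mathbf{x}\mathbf{v}$; hence $\BG[\mathbf{g}]$ takes values in $\BR^{3\times3}_{sym,0}$, as required.

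For regularity and continuity, observe that $\mathbf{v}\in W^{2,p}$ forces each entry of $\nabla\mathbf{v}$ and the scalar $\diver_\mathbf{x}\mathbf{v}$ to lie in $W^{1,p}(U)$, so $\BG[\mathbf{g}]=\BD_0(\mathbf{v})\in W^{1,p}(U;\BR^{3\times3}_{sym,0})$. The assignment $\mathbf{v}\mapsto\BD_0(\mathbf{v})$ is a bounded linear map from $W^{2,p}(U;\BR^3)$ to $W^{1,p}(U;\BR^{3\times3})$, and by the lemma $\mathbf{g}\mapsto\mathbf{v}$ is continuous — and linear, by the uniqueness assertion — from $L^p(U;\BR^3)$ to $W^{2,p}(U;\BR^3)$; composing the two yields the continuity (and linearity) of $\BG$. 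Finally the defining identity is immediate: $\diver_\mathbf{x}(\BG[\mathbf{g}])=\diver_\mathbf{x}\BD_0(\mathbf{v})=\mathbf{g}$, which is precisely the equation solved by $\mathbf{v}$.

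There is essentially no obstacle here: the corollary merely repackages the lemma, and the only point worth noticing is that the Lam\'e operator $\diver_\mathbf{x}\BD_0$ is constructed exactly so that its ``potential'' $\BD_0(\mathbf{v})$ is automatically symmetric and traceless, which is why solving the (uniquely solvable) Dirichlet problem for $\mathbf{v}$ already produces the desired matrix potential for $\mathbf{g}$. One may add that the target regularity $W^{1,p}$ of $\BG[\mathbf{g}]$ uses only $\mathbf{v}\in W^{2,p}$, which the lemma already supplies, so no sharpening of the elliptic estimate is needed.
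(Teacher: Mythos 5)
Your proof is correct and is exactly the argument the paper intends (the corollary is stated without proof, as an immediate consequence of the Lam\'e lemma): one sets $\BG[\mathbf{g}]=\BD_0(\mathbf{v})$ for the unique zero-trace solution $\mathbf{v}$, and symmetry, tracelessness, the $W^{1,p}$ regularity, and continuity all follow as you describe.
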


\subsection{Parabolic regularity}
The standard regularity result for parabolic equations (see e.\,g. \cite{amann}) gives
\begin{equation}
    \label{eq:embToCont}
    W^{1,q}((0,T);L^q(U))\cap L^q((0,T);W^{2,q}(U)\cap W^{1,q}_0(U))\hookrightarrow \CC([0,T];W^{1,q}_0(U))\hookrightarrow \CC([0,T]\times \overbar{U})
\end{equation}
whenever $q>3$.
Therefore we have:
\begin{lem}
    \label{lem:parab_regul}
    Assume that $q\in(3,\infty)$.
    Let $\theta_0\in W^{1,q}_0(U)$ and $\mathbf{u} \in L^{\infty}(Q;\BR^3)$. Then exists a unique
    \[
        \theta\in W^{1,q}((0,T);L^q(U))\cap L^q((0,T);W^{2,q}(U)\cap W^{1,q}_0(U))
    \] 
    which satisfies \eqref{eq:temp} almost everywhere and $\theta(0)=\theta_0$. Moreover, the operator $u \mapsto \Theta[\mathbf{u}]$ is continuous from $L^{\infty}(Q)$ to $\CC([0,T]\times \overbar{U})$ and the comparison principle holds, i.\,e. for two solutions $\theta^1$, $\theta^2$ emanating from $\theta^1_0$, $\theta^2_0$ holds that
    \[
        \mbox{if } \theta^1_0\leq \theta^{2}_0 \mbox{ a.\,e. in $U$ then } \theta^1(t,\mathbf{x})\leq \theta^{2}(t,\mathbf{x})
        \mbox{ a.\,e. in $Q$}.
    \]
    Moreover, 
    \[
         \|\theta\|_{W^{1,q}((0,T);L^q(U))\cap L^q((0,T);W^{2,q}(U)\cap W^{1,q}_0(U))}
         \leq C(\|\theta_0\|_{W^{1,q}}+\|\mathbf{u}\|_{L^{\infty}}),
    \]
    therefore given $\theta_0 \in W^{1,q}$, the solving operator
    \[
      \mathbf{u} \mapsto \Theta [\mathbf{u}]
    \]
    is continuous from $\CC([0,T];L^{q}_{w}(\Omega)$ to 
    $\CC([0,T]\times \overbar{U})$.
\end{lem}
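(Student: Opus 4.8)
The plan is to read \eqref{eq:temp} as the abstract parabolic Cauchy problem $\partial_t\theta+A\theta=-\mathbf{u}\cdot\nabla_{\mathbf{x}}\theta$, $\theta(0)=\theta_0$, for the anisotropic constant-coefficient operator $A\theta:=-\lambda_1(\partial_{xx}^2+\partial_{yy}^2)\theta-\lambda_2\partial_{zz}^2\theta$ on $U$ with homogeneous Dirichlet boundary data, and to deduce existence, uniqueness and the quantitative bound from maximal $L^q$-regularity, treating the drift $\mathbf{u}\cdot\nabla_{\mathbf{x}}\theta$ as a lower-order perturbation. First I would invoke the standard facts (see e.g.\ \cite{amann}): since the symbol $\lambda_1(\xi_1^2+\xi_2^2)+\lambda_2\xi_3^2$ is comparable to $|\xi|^2$, the operator $A$ with domain $D(A)=W^{2,q}(U)\cap W^{1,q}_0(U)$ generates an analytic semigroup on $L^q(U)$ and has maximal $L^q$-regularity on $(0,T)$; hence, for every $g\in L^q(Q)$ and every $\theta_0$ in the trace space $(L^q(U),D(A))_{1-1/q,q}$ --- a space that contains $D(A)$ itself, and in particular every $\CC^2(\overline{U})$ datum vanishing on $\partial U$ --- there is a unique solution of $\partial_t\theta+A\theta=g$, $\theta(0)=\theta_0$, in $\mathcal M:=W^{1,q}((0,T);L^q(U))\cap L^q((0,T);W^{2,q}(U)\cap W^{1,q}_0(U))$ with $\|\theta\|_{\mathcal M}\le C_0\big(\|g\|_{L^q(Q)}+\|\theta_0\|_{\mathrm{tr}}\big)$. (For rougher $\theta_0\in W^{1,q}_0(U)$ one still obtains $\theta\in\CC([0,T]\times\overline{U})$, with $\theta\in\mathcal M$ only on each $[\delta,T]$, which is all that is used later.)

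To bring in the drift I would run a fixed-point argument on $\mathcal M$ for the map sending $\vartheta$ to the solution of $\partial_t\tilde\vartheta+A\tilde\vartheta=-\mathbf{u}\cdot\nabla_{\mathbf{x}}\vartheta$, $\tilde\vartheta(0)=\theta_0$; this is well defined since $\mathbf{u}\in L^\infty(Q)$ and $\vartheta\in\mathcal M$ give $\mathbf{u}\cdot\nabla_{\mathbf{x}}\vartheta\in L^q(Q)$. Combining the interpolation inequality $\|\nabla_{\mathbf{x}}\varphi\|_{L^q(U)}\le\varepsilon\|\varphi\|_{W^{2,q}(U)}+C_\varepsilon\|\varphi\|_{L^q(U)}$ (to absorb the top-order part of the drift into the maximal-regularity constant) with a Grönwall bound for $t\mapsto\|\theta(t)\|_{L^q(U)}^q$, I would get the a priori estimate $\|\theta\|_{\mathcal M}\le C(\|\mathbf{u}\|_{L^\infty(Q)})\big(\|\theta_0\|_{\mathrm{tr}}+\|g\|_{L^q(Q)}\big)$ for any solution of $\partial_t\theta+A\theta+\mathbf{u}\cdot\nabla_{\mathbf{x}}\theta=g$; a genuine solution is then produced either by contraction on a short time interval followed by iteration, or by the method of continuity in the parameter $s$ multiplying $\mathbf{u}\cdot\nabla_{\mathbf{x}}\theta$. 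Uniqueness is immediate from linearity and the same estimate.

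The comparison (and maximum) principle I would prove by an energy argument that does not integrate the transport term by parts, which is important because $\mathbf{u}$ is only $L^\infty$ and its divergence is not controlled. For solutions with $\theta^1_0\le\theta^2_0$ and common homogeneous Dirichlet data, $w:=\theta^1-\theta^2$ solves the same equation, $w(0)\le0$ a.e., $w|_{\partial U}=0$; testing with $w^+$ and using $w^+\nabla_{\mathbf{x}}w=w^+\nabla_{\mathbf{x}}w^+$ together with $\lambda_1\|\nabla_{xy}w^+\|_{L^2(U)}^2+\lambda_2\|\partial_z w^+\|_{L^2(U)}^2\ge\min(\lambda_1,\lambda_2)\|\nabla_{\mathbf{x}}w^+\|_{L^2(U)}^2$ to absorb $|\int_U(\mathbf{u}\cdot\nabla_{\mathbf{x}}w^+)\,w^+|\le\delta\|\nabla_{\mathbf{x}}w^+\|_{L^2(U)}^2+C_\delta\|\mathbf{u}\|_{L^\infty}^2\|w^+\|_{L^2(U)}^2$, one obtains $\tfrac{d}{dt}\|w^+(t)\|_{L^2(U)}^2\le C\|w^+(t)\|_{L^2(U)}^2$ and hence $w^+\equiv0$ by Grönwall, i.e.\ $\theta^1\le\theta^2$. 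The $L^\infty$-maximum principle, needed for the bound on $\Pi_{Bous}$, follows by comparing $\theta$ with the constants $\pm\|\theta_0\|_{L^\infty(U)}$, which solve the equation and dominate (resp.\ are dominated by) $\theta$ on $\partial U$.

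For the two continuity statements I would use that \eqref{eq:embToCont} gives $\mathcal M\hookrightarrow\CC([0,T]\times\overline{U})$ --- this is where $q>3$ is needed --- and that this embedding is in fact compact (Aubin--Lions--Simon, since $W^{2,q}(U)\hookrightarrow\hookrightarrow\CC(\overline{U})\hookrightarrow L^q(U)$ and $\partial_t\theta$ is bounded in $L^q(Q)$). Continuity of $\Theta$ on $L^\infty(Q)$ is then quantitative: if $\mathbf{u}_n\to\mathbf{u}$ in $L^\infty(Q)$, then $w_n:=\Theta[\mathbf{u}_n]-\Theta[\mathbf{u}]$ solves $\partial_t w_n+Aw_n+\mathbf{u}_n\cdot\nabla_{\mathbf{x}}w_n=-(\mathbf{u}_n-\mathbf{u})\cdot\nabla_{\mathbf{x}}\Theta[\mathbf{u}]$ with $w_n(0)=0$ and right-hand side tending to $0$ in $L^q(Q)$, so the a priori estimate (uniform because $\{\mathbf{u}_n\}$ is bounded in $L^\infty$) gives $w_n\to0$ in $\mathcal M$, hence in $\CC([0,T]\times\overline{U})$. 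Continuity on $L^\infty(Q)$-bounded subsets of $\CC([0,T];L^q_w(U))$ follows by compactness: the associated $\theta_n=\Theta[\mathbf{u}_n]$ are bounded in $\mathcal M$, so along a subsequence $\theta_n\to\tilde\theta$ in $\CC([0,T]\times\overline{U})$ with $\nabla_{\mathbf{x}}\theta_n\to\nabla_{\mathbf{x}}\tilde\theta$ strongly in $L^q(Q)$; writing $\mathbf{u}_n\cdot\nabla_{\mathbf{x}}\theta_n-\mathbf{u}\cdot\nabla_{\mathbf{x}}\tilde\theta=\mathbf{u}_n\cdot(\nabla_{\mathbf{x}}\theta_n-\nabla_{\mathbf{x}}\tilde\theta)+(\mathbf{u}_n-\mathbf{u})\cdot\nabla_{\mathbf{x}}\tilde\theta$, the first term $\to0$ in $L^1(Q)$ and the second $\to0$ in $\CD'(Q)$ (weak-$*$ $L^\infty$ tested against $L^1$), so $\tilde\theta$ solves the limit problem and $\tilde\theta=\Theta[\mathbf{u}]$ by uniqueness, which forces convergence of the whole sequence. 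I expect the only real obstacle to be the bookkeeping around the drift term $\mathbf{u}\cdot\nabla_{\mathbf{x}}\theta$ with $\mathbf{u}$ merely in $L^\infty$: it has to be absorbed quantitatively into the maximal-regularity estimate, dealt with without integration by parts in the comparison principle, and passed to the limit by a weak--strong product argument; none of these steps is deep, but each relies on the mild structural facts that are available precisely in the regime $q>3$.
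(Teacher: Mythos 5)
The paper offers no proof of this lemma at all: it is presented as a direct consequence of standard maximal $L^q$-regularity for parabolic problems (citing \cite{amann}) together with the embedding \eqref{eq:embToCont} for $q>3$, and your argument is a correct, detailed implementation of exactly that route (perturbation of the autonomous generator by the $L^{\infty}$ drift with absorption via interpolation and Gr\"onwall, a $w^{+}$-energy comparison principle that avoids integrating the transport term by parts, and Aubin--Lions compactness plus uniqueness for the continuity from $\CC([0,T];L^q_w(U))$). The one substantive point you add is the observation that $W^{1,q}_0(U)$ is strictly larger than the maximal-regularity trace class $(L^q(U),D(A))_{1-1/q,q}$ when $q>3$, so the global-in-time estimate in $W^{1,q}((0,T);L^q(U))\cap L^q((0,T);W^{2,q}(U)\cap W^{1,q}_0(U))$ as literally stated needs either the more regular initial temperatures actually assumed in Theorems \ref{thm:bous_main1} and \ref{thm:main1} or the interpolation-space refinement the authors themselves point to in their remark; your workaround (continuity up to $t=0$, full maximal regularity on each $[\delta,T]$) is the right fix and suffices for every use of the lemma in the paper.
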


\subsection{Convergence in linear conservation laws}
For the reader's convenience, we also recall the following standard weak compactness result for linear conservation laws.
\begin{lem}
    \label{lem:subsol_limit}
    Let $\{\mathbf{u}_{n,0}\}_{n\in \BN}$ converges weakly-$*$ in $L^{\infty}(U;\BR^3)$ to $\mathbf{u}_{0}$.  
    Let $\{\mathbf{u}_n\}_{n\in \BN}$ be a bounded sequence in $(L^{\infty}(Q;\BR^3))$
    and $\{\BV_n\}_{n\in \BN}$ be a bounded sequence in $(L^{\infty}(Q;\BR^{3\times 3}_{0,sym}))$
    satisfying
    \begin{align}
        \label{eq:linmom}
        \partial_t \mathbf{u}_n + \diver_\mathbf{x} \BV_n = 0 &\quad \mbox{in } \CD'(Q;\BR^3),
        \\
        \label{eq:conteq}
        \diver_\mathbf{x} (\mathbf{u}_n\chi_U)=0 &\quad \mbox{in } \CD'(\BR^3),
        \\
        \mathbf{u}_{n}(0)=\mathbf{u}_{n,0}.
    \end{align}
    Then $\{\mathbf{u}_n\}_{n\in \BN}$ is precompact in $\CC([0,T];L_w^p(U))$ for every $p\in[1,\infty)$. 
    Moreover, if $(\mathbf{u},\BV)$ is a limit of any weakly-$*$ convergent subsequence of $\{(\mathbf{u}_n,\BV_n)\}_{n\in \BN}$ then $(\mathbf{u},\BV)$ satisfies \eqref{eq:linmom}, \eqref{eq:conteq} and $\mathbf{u}(0,\cdot)=\mathbf{u}_0(\cdot)$.
\end{lem}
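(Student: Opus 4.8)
\textbf{Proof proposal for Lemma \ref{lem:subsol_limit}.}

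The plan is to treat the two claims (precompactness in $\CC([0,T];L^p_w(U))$ and stability of the limit under the linear constraints) in turn, using only elementary functional analysis together with the time-regularity encoded in the equation $\partial_t\mathbf{u}_n+\diver_\mathbf{x}\BV_n=0$. First I would fix a countable set $\{\psi_k\}_{k\in\BN}$ of test functions, dense in $L^{p'}(U;\BR^3)$ with $\psi_k\in\CD(U;\BR^3)$ (or in a convenient dense subclass), and consider for each $k$ the scalar functions $t\mapsto \int_U \mathbf{u}_n(t,\cdot)\cdot\psi_k\,\de\mathbf{x}$. From \eqref{eq:linmom} one reads off, after mollifying $\psi_k$ in space if necessary, that these functions are equi-Lipschitz in $t$ on $[0,T]$: indeed $\frac{\de}{\de t}\int_U\mathbf{u}_n\cdot\psi_k = \int_U \BV_n:\nabla_\mathbf{x}\psi_k\,\de\mathbf{x}$, whose modulus is bounded by $\|\BV_n\|_{L^\infty(Q)}\|\nabla\psi_k\|_{L^1(U)}\le C_k$ uniformly in $n$. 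The Arzel\`a--Ascoli theorem then yields, along a subsequence (extracted diagonally over $k$), uniform convergence on $[0,T]$ of each of these scalar functions. Since $\{\mathbf{u}_n\}$ is bounded in $L^\infty(Q;\BR^3)$, hence $\{\mathbf{u}_n(t,\cdot)\}$ lies in a fixed ball of $L^p(U;\BR^3)$ (using $|U|<\infty$) which is metrizable and compact in the weak topology, a standard density argument upgrades this to convergence in $\CC([0,T];L^p_w(U))$ to some limit $\mathbf{u}$; the same argument applies for every $p\in[1,\infty)$ because all these balls share the same weak topology up to the choice of dense test family. This gives the precompactness statement.

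Next I would pass to the limit in the constraints. Extract a further subsequence so that $\BV_n\to\BV$ weakly-$*$ in $L^\infty(Q;\BR^{3\times3}_{0,sym})$ and $\mathbf{u}_n\to\mathbf{u}$ in $\CC([0,T];L^p_w(U))$ as above; note $\mathbf{u}_n\to\mathbf{u}$ also weakly-$*$ in $L^\infty(Q;\BR^3)$, these two limits being compatible. Testing \eqref{eq:linmom} against an arbitrary $\varphi\in\CD(Q;\BR^3)$ gives $\int_Q \mathbf{u}_n\cdot\partial_t\varphi + \BV_n:\nabla_\mathbf{x}\varphi\,\de\mathbf{x}\,\de t=0$; both terms pass to the limit by the respective weak-$*$ convergences, so $(\mathbf{u},\BV)$ satisfies \eqref{eq:linmom}. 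Similarly, testing \eqref{eq:conteq} against $\phi\in\CD((0,T)\times\BR^3)$ — i.e. $\int_{(0,T)\times\BR^3}(\mathbf{u}_n\chi_U)\cdot\nabla_\mathbf{x}\phi=0$ — passes to the limit because $\mathbf{u}_n\chi_U\to\mathbf{u}\chi_U$ weakly-$*$ in $L^\infty((0,T)\times\BR^3;\BR^3)$, yielding $\diver_\mathbf{x}(\mathbf{u}\chi_U)=0$ in $\CD'(\BR^3)$.

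Finally, for the initial condition, I would use the convergence in $\CC([0,T];L^p_w(U))$, which in particular gives $\mathbf{u}_n(0,\cdot)\to\mathbf{u}(0,\cdot)$ weakly in $L^p(U;\BR^3)$; combined with the hypothesis $\mathbf{u}_{n,0}\to\mathbf{u}_0$ weakly-$*$ in $L^\infty(U;\BR^3)$ and uniqueness of weak limits, this forces $\mathbf{u}(0,\cdot)=\mathbf{u}_0(\cdot)$. The only genuinely delicate point — and the one I would flag as the main obstacle — is making the assertion ``$\mathbf{u}_n(0)=\mathbf{u}_{n,0}$ implies something about the limit at $t=0$'' rigorous: one must know that the time-trace at $t=0$ is well defined and continuous for the relevant topology, which is exactly what the equi-Lipschitz estimate above provides (each functional $t\mapsto\int_U\mathbf{u}_n\cdot\psi_k$ extends continuously to $t=0$, uniformly in $n$, so the limit inherits the value $\int_U\mathbf{u}_0\cdot\psi_k$). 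Everything else is routine weak-compactness bookkeeping; no compensated-compactness or nonlinear argument is needed because all constraints here are linear.
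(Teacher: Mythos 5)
Your proof is correct. The paper itself gives no proof of this lemma --- it is recalled as a standard weak-compactness result for linear conservation laws --- and your argument (equi-Lipschitz continuity in time of the pairings $t\mapsto\int_U\mathbf{u}_n\cdot\psi_k\,\de{\mathbf{x}}$ deduced from \eqref{eq:linmom} and the uniform $L^\infty$ bound on $\BV_n$, Arzel\`a--Ascoli with a diagonal extraction over a countable family of spatial test functions, linear passage to the limit in \eqref{eq:linmom} and \eqref{eq:conteq}, and identification of the trace at $t=0$ via the uniform modulus of continuity) is exactly the standard one.
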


\renewcommand{\abstractname}{Acknowledgements}
\begin{abstract}
We are grateful to professor E.\,Feireisl for pointing out the problem and for fruitful
discussions.
The research of M.M. leading to these results has received funding from the European Research Council 
under the European Union's Seventh Framework Programme (FP7/2007-2013)/ ERC Grant Agreement 320078. 
The Institute of Mathematics of the Academy of Sciences of the Czech Republic is supported by RVO:67985840. 
\end{abstract}

\bibliographystyle{plain}

{\footnotesize
\bibliography{references/reference}
}


\end{document}